\newtheorem{thm}{Theorem}[section]
\newtheorem{lem}[thm]{Lemma}
\newtheorem{prop}[thm]{Proposition}
\theoremstyle{definition}
\newtheorem{defn}[thm]{Definition}
\theoremstyle{remark}
\newtheorem{rem}[thm]{Remark}
\newtheorem{exa}[thm]{Example}
\numberwithin{equation}{section}
\newcommand{\R}{\mathbb{R}}
\newcommand{\N}{\mathbb{N}}
\newcommand{\ind}{\mathbf{1}}
\newcommand{\cL}{\mathcal{L}}
\newcommand{\cLloc}{\mathcal{L}_{\text{loc}}}
\newcommand{\cH}{\mathcal{H}}
\newcommand{\M}{\mathcal{M}}
\newcommand{\MP}{\mathcal{M}(\PP)}
\newcommand{\MPzero}{\mathcal{M}_0(\PP)}
\newcommand{\MQ}{\mathcal{M}(\QQ)}
\newcommand{\MQzero}{\mathcal{M}_0(\QQ)}
\newcommand{\cE}{\mathcal{E}}
\newcommand{\cF}{\mathcal{F}}
\newcommand{\FF}{\mathbb{F}}
\newcommand{\EE}{\mathbb{E}}
\newcommand{\PP}{\mathbb{P}}
\newcommand{\QQ}{\mathbb{Q}}
\newcommand{\ud}{\mathrm d}
\newcommand{\Aloc}{\mathcal{A}_{\text{loc}}}
\newcommand{\Mloc}{\M_{\text{loc}}}
\newcommand{\mMloc}{\M_{\text{\emph{loc}}}}
\newcommand{\MlocP}{\M_{\text{loc}}(\PP)}
\newcommand{\MlocPzero}{\M_{0,\text{loc}}(\PP)}
\newcommand{\MlocQ}{\M_{\text{loc}}(\QQ)}
\newcommand{\MlocQzero}{\M_{0,\text{loc}}(\QQ)}
\newcommand{\mMlocP}{\M_{\text{\emph{loc}}}(\PP)}
\newcommand{\mMlocQ}{\M_{\text{\emph{loc}}}(\QQ)}
\newcommand{\mMlocQzero}{\M_{0,\text{\emph{loc}}}(\QQ)}
\newcommand{\mMlocPzero}{\M_{0,\text{\emph{loc}}}(\PP)}
\newcommand{\abs}{\ll_{{\rm loc}}}
\newcommand{\equ}{\sim_{{\rm loc}}}
\newcommand{\onedim}{{\rm dim}\,}
\newcommand{\dbra}[1]{[\kern-0.15em[ #1 ]\kern-0.15em]}
\newcommand{\dbraco}[1]{[\kern-0.15em[ #1 [\kern-0.15em[}
\newcommand{\dbraoc}[1]{]\kern-0.15em] #1 ]\kern-0.15em]}
\newcommand{\dbraoo}[1]{]\kern-0.15em] #1 [\kern-0.15em[}
\newcommand{\be}{\begin{equation}}
\newcommand{\ee}{\end{equation}}
\newcommand{\ba}{\begin{aligned}}
\newcommand{\ea}{\end{aligned}}
\begin{document}

\title[Martingale representations and absolutely continuous changes of probability]{Martingale spaces and representations \\ under absolutely continuous changes of probability}

\author[A. Aksamit]{Anna Aksamit}
\address{Anna Aksamit, School of Mathematics and Statistics, Faculty of Science, University of Sydney (Australia)}
\email{anna.aksamit@sydney.edu.au}

\author[C. Fontana]{Claudio Fontana}
\address{Claudio Fontana, Department of Mathematics ``Tullio Levi-Civita'', University of Padova (Italy)}
\email{fontana@math.unipd.it}

\thanks{The authors are grateful to Shiqi Song for suggesting the construction of the example given in Section \ref{sec:example1} and to two anonymous referees for their careful reading and valuable comments on the paper. 
This work has been supported by the European Research Council under the European Union's Seventh Framework Programme (FP7/2007-2013) / ERC grant agreement no. 335421.}
\subjclass[2010]{60G07, 60G44, 60H05}
\keywords{Predictable representation property; non-equivalent change of probability.}

\date{\today}


\maketitle

\begin{abstract}
In a fully general setting, we study the relation between martingale spaces under two locally absolutely continuous probabilities and prove that the martingale representation property (MRP) is always stable under locally absolutely continuous changes of probability. 
Our approach relies on minimal requirements, is constructive and, as shown by a simple example, enables us to study situations which cannot be covered by the existing theory. 
\end{abstract}

\section{Introduction}

Martingale representation results have fundamental applications in stochastic control, filtering, backward stochastic differential equations and mathematical finance, notably in connection with the property of market completeness. In all these fields, absolutely continuous changes of probability play an equally important role, often leading to a substantial simplification of the problem under consideration.
This motivates the interest of studying how spaces of martingales under two absolutely continuous probabilities are connected and, more specifically, the behavior of the {\em martingale representation property} (MRP, see Definition \ref{df:mrp} below) under absolutely continuous (not necessarily equivalent) changes of probability.
In this paper, we aim at developing a general theory for these questions under minimal assumptions. This enables us to simplify and extend previous results to full generality, covering situations that cannot be addressed by the existing theory.

To the best of our knowledge, the most general result available in the literature on the behavior of the MRP under absolutely continuous changes of probability can be found in \cite[Theorem 13.12]{HWY} and can be stated as follows (see also \cite[Lemma 2.5]{JS15} and \cite[Theorem 15.2.8]{CohenElliott}):
{\em Let $\PP$ and $\QQ$ be two probability measures on $(\Omega,\cF,\FF)$ such that $\QQ\ll\PP$, with density process $Z$, and let $X=(X_t)_{t\geq0}$ be a real-valued $\PP$-local martingale having the MRP under $\PP$. Suppose that the process $[X,Z]$ has locally integrable variation under $\PP$. Then, the process $X':=X-(Z_-)^{-1}\cdot\langle X,Z\rangle^{\PP}$ is a $\QQ$-local martingale and has the MRP under $\QQ$.}
A multi-dimensional version of this result, under the additional assumption of local boundedness of $1/Z$ under $\PP$, was first obtained in \cite{Duffie85}.

The crucial assumption in the above result is the requirement that $[X,Z]$ has locally integrable variation under $\PP$ (or, equivalently, that $X$ is a special semimartingale under $\QQ$). 
This leaves open the question of whether, in the absence of such a condition, the MRP is preserved or not under an absolutely continuous change of probability. We provide a positive answer in full generality, without any further assumption beyond local absolute continuity  (Theorem \ref{thm2}). 
One of the key steps in our approach consists in replacing the usual version of Girsanov's theorem (see, e.g., \cite[Theorem III.3.11]{JS03}) with its most general version proven in \cite{Leng77}.
Besides the greater generality, our proofs are more elementary and constructive than those in \cite{Duffie85,HWY} and yield an explicit description of the stochastic integral representation (Remark \ref{rem:integrand}).
As shown by means of an explicit example (Section \ref{sec:example1}), there exist simple situations that are not covered by the existing theory and for which our results yield an explicit MRP.

From a more abstract standpoint, we obtain a new and general characterization of the set of $\QQ$-martingales as the smallest stable subset generated by suitable transformations of $\PP$-martingales (Theorem \ref{thm1}).
By relying on our main results, we then address further issues, including the practically relevant case of locally equivalent probabilities and the dimension of martingale spaces under locally absolutely continuous probabilities (Section \ref{sec:corollaries}).
In particular, these results enable us to provide a general solution to an open problem stated in \cite{Zit06}.
We want to point out that, even though the present paper focuses on theoretical aspects, our results have relevant applications, notably in mathematical finance in the context of equilibrium models (see, e.g., \cite{KP17,Zit06}).\footnote{We emphasize that, in equilibrium models (see, e.g., \cite{Zit06}), the probability measure $\QQ$ is constructed endogenously. Therefore, it is crucial to have MRP stability results which do not impose a priori conditions on the density process $Z$, unlike the existing results on MRP under changes of probability.}

The paper is structured as follows. Section \ref{sec:notation} introduces  necessary notations and terminology. Section \ref{sec:setting} recalls the setting and a crucial preliminary result due to \cite{Leng77}. Section \ref{sec:main} contains our main results, while further properties and ramifications are presented in Section \ref{sec:corollaries}. In Section \ref{sec:example}, we give some examples, including a simple one which falls beyond the scope of the existing results and to which our theory applies. The proofs of all results are collected in Section \ref{sec:proofs}.


\subsection{Notation}	\label{sec:notation}

Throughout the paper, we shall make use of the following notation, referring to \cite{JS03} for all unexplained notions. Let $(\Omega,\cF,\PP)$ be a probability space endowed with a right-continuous (not necessarily complete) filtration $\FF=(\cF_t)_{t\geq0}$.
On $(\Omega,\cF,\FF,\PP)$, we denote by $\MP$ ($\MlocP$, resp.) the set of all real-valued martingales (local martingales, resp.), tacitly assumed to have a.s. c\`adl\`ag paths. 
We let $\Aloc(\PP)$ be the set of all real-valued adapted processes of locally integrable variation and, for $A\in\Aloc(\PP)$, we denote by $A^{p,\PP}$ the dual predictable projection of $A$ under $\PP$.
The set of $\cH^1$-martingales on $(\Omega,\cF,\FF,\PP)$ is defined as $\cH^1(\PP):=\{M\in\MP:\EE[\sup_{t\geq0}|M_t|]<+\infty\}$. 
Let us also introduce $\MPzero:=\{M\in\MP : M_0=0\}$ and similarly for $\MlocPzero$ and $\cH^1_0(\PP)$.
We recall that, for every $M\in\MlocP$, there exists a unique decomposition $M=M^c+M^d$ into a continuous and a purely discontinuous local martingale.
We denote respectively by $\Mloc^c(\PP)$ and $\Mloc^d(\PP)$ the set of all real-valued continuous and purely discontinuous local martingales on $(\Omega,\cF,\FF,\PP)$.
If $M=(M_t)_{t\geq0}$ is an $\R^d$-valued process such that $M^i\in\MlocP$, for each $i=1,\ldots,d$, we denote by $L_m(M,\PP)$ the set of all $\R^d$-valued predictable processes which are integrable with respect to $M$ under the measure $\PP$ in the sense of local martingales (see \cite[Definition 2.46]{J79}).
For $H\in L_m(M,\PP)$,  the stochastic integral of $H$ with respect to $M$ is denoted by $(H\cdot M)_t:=\int_{(0,t]}H_u\ud M_u$, for all $t\geq0$, with $(H\cdot M)_0=0$, similarly as in \cite{JS03}.
Finally, for a set $\mathcal{Y}\subseteq\MlocP$, we denote by $\cL^1(\mathcal{Y},\PP)$ the stable subspace generated by $\mathcal{Y}$ in $\cH^1(\PP)$, i.e., the smallest stable subspace of $\cH^1(\PP)$ containing $\{H\cdot Y : Y\in\mathcal{Y}, H\in L_m(Y,\PP)\text{ and }H\cdot Y\in\cH^1(\PP)\}$ (see \cite[Definition 4.4]{J79}).
The class $\cLloc^1(\mathcal{Y},\PP)$ is defined in the usual way by localization.

A probability measure $\QQ$ on $(\Omega,\cF)$ is said to be {\em locally absolutely continuous} with respect to $\PP$, denoted as $\QQ\abs\PP$, if $\QQ|_{\cF_t}\ll\PP|_{\cF_t}$ for all $t\geq0$. 

\begin{rem}[On the completeness of $\FF$]		\label{rem:incomplete}
In the present paper, we shall be interested in locally absolutely continuous changes of probability from $\PP$ to $\QQ$. In particular, it may happen that $\QQ\abs\PP$, while $\QQ\ll\PP$ does not hold on $\cF_{\infty-}:=\bigvee_{t\in\R_+}\cF_t$. This implies that a $\QQ$-complete filtration $\FF$ is not necessarily $\PP$-complete (and viceversa). For this reason, we shall not assume completeness of the filtration.
Most of the standard results of stochastic calculus can be developed without relying on the completeness assumption, as long as path properties are required to hold in an a.s. sense. We refer to \cite{J79,JS03} for two complete presentations of the theory which avoid the use of complete filtrations as far as possible (see also \cite[Appendix A]{PR15}). In the following, we shall point out explicitly where the potential incompleteness of $\FF$ requires modifications of existing results.
\end{rem}

\section{Results}

\subsection{Setting and preliminaries}	\label{sec:setting}

We consider a probability space $(\Omega,\cF,\PP)$ endowed with a right-continuous (not necessarily $\PP$-complete) filtration $\FF=(\cF_t)_{t\geq0}$ and a probability measure $\QQ\abs\PP$. 
In view of \cite[Theorem III.3.4]{JS03}, the density process of $\QQ$ relative to $\PP$ is the unique non-negative process $Z\in\MP$ such that $\ud\QQ|_{\cF_t}=Z_t\,\ud\PP|_{\cF_t}$, for all $t\geq0$. 
Let us define the stopping times
\begin{equation}
\label{stimes}
\zeta := \inf\{t\in\R_+ : Z_{t-}=0\text{ or }Z_t=0\}
\quad\text{ and }\quad
\eta := \zeta\ind_{\Lambda} + \infty\ind_{\Omega\setminus\Lambda},
\;\text{ with }
\Lambda:=\{\zeta<+\infty,Z_{\zeta-}>0\}.
\end{equation}
Note that $\QQ(\zeta<+\infty)=0$, while $\QQ|_{\cF_t}\sim\PP|_{\cF_t}$ holds if and only if $\PP(Z_t>0)=1$.

The behavior of local martingales under locally absolutely continuous, but not necessarily equivalent, changes of probability has been studied in \cite{Leng77}, from which we recall the following fundamental result (compare also with \cite[Theorems 12.12 and 12.20]{HWY}).\footnote{A careful examination of the proof of \cite[Theorem 3]{Leng77} shows that it still holds true for non-complete filtrations, since it is based on standard operations in stochastic calculus which are valid in general filtrations by  \cite{J79,JS03}. In the statement of Proposition \ref{prop:Leng}, we consider without loss of generality a version of $X$ that is measurable with respect to the optional $\sigma$-field on the $\overline{\PP}$-completion of the filtration $\FF$, where $\overline{\PP}:=(\PP+\QQ)/2$. Note that, as a consequence of Lemma \ref{lem:RC}, all processes have a.s. c\`adl\`ag paths under the respective probability measures (recall that we tacitly assume that all local martingales have a.s. c\`adl\`ag paths).
}

\begin{prop}	\label{prop:Leng}
For an adapted process $X$, the following hold:
\begin{enumerate}
\item[(i)] $X\in\MQ$ if and only if $ZX\in\MP$;
\item[(ii)] $X\in\mMlocQ$ if and only if there exists a sequence of stopping times $(\tau_n)_{n\in\N}$ increasing $\QQ$-a.s. to infinity such that $(ZX)^{\tau_n}\in\mMlocP$, for each $n\in\N$;
\item[(iii)] if $X\in\mMlocP$, then
\be	\label{eq:transform}
\widehat{X} := X - \frac{1}{Z}\cdot [X,Z] + \left(\Delta X_{\eta}\ind_{\dbraco{\eta,+\infty}}\right)^{p,\PP} \in \mMlocQ.
\ee
\end{enumerate}
\end{prop}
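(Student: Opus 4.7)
The plan is to prove the three parts in order, since each leverages the previous. Part (i) is the classical Bayes equivalence: for $B\in\cF_s$ with $s\leq t$, the identity $\ud\QQ|_{\cF_t}=Z_t\,\ud\PP|_{\cF_t}$ gives $\EE_\QQ[X_t\ind_B]=\EE_\PP[Z_tX_t\ind_B]$, and combined with the $\PP$-martingale property of $Z$ (invoked via the tower property) it matches the $\QQ$-martingale identity for $X$ with the $\PP$-martingale identity for $ZX$; integrability transfers through $\EE_\PP[|Z_tX_t|]=\EE_\QQ[|X_t|]$.

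For (ii), the ``only if'' direction follows directly from (i): a $\QQ$-localizing sequence $(\sigma_n)$ for $X$ yields $ZX^{\sigma_n}\in\MP$, and stopping at $\sigma_n$ gives $(ZX)^{\sigma_n}=(ZX^{\sigma_n})^{\sigma_n}\in\MP\subseteq\MlocP$. For the converse, given $(\tau_n)$ with $(ZX)^{\tau_n}\in\MlocP$, I would further localize by $\PP$-stopping times $(\rho_{n,k})$ making $(ZX)^{\tau_n\wedge\rho_{n,k}}$ a genuine $\PP$-martingale, then combine Bayes with the tower identity $\EE_\PP[Z_t\mid\cF_{t\wedge\tau_n\wedge\rho_{n,k}}]=Z_{t\wedge\tau_n\wedge\rho_{n,k}}$ to conclude $X^{\tau_n\wedge\rho_{n,k}}\in\MQ$. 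Since $\PP$-a.s.\ convergence implies $\QQ$-a.s.\ convergence under $\QQ\abs\PP$, a diagonal extraction of $\tau_n\wedge\rho_{n,k_n}$ produces the required $\QQ$-localizing sequence.

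For (iii), by (ii) it is enough to exhibit $(\tau_n)$ increasing $\QQ$-a.s.\ to infinity with $(Z\widehat{X})^{\tau_n}\in\MlocP$. A natural choice is $\tau_n:=\inf\{t\geq 0:Z_{t-}<1/n\}\wedge n$, which satisfies $\tau_n<\eta$ on $\Lambda$ and $\tau_n\uparrow\infty$ $\QQ$-a.s.\ since $\QQ(\zeta<\infty)=0$. On $[0,\tau_n]$ the process $Z_-$ is bounded away from zero, so $\tfrac{1}{Z}\cdot[X,Z]$ and its predictable projection are well-defined (after further localization to control the total variation of $[X,Z]$). Applying integration by parts to $Z\widehat{X}^{\tau_n}$, expanding $\widehat{X}$ via its definition and using the jump identity $1-Z_{-}/Z=\Delta Z/Z$, the result splits into a $\PP$-local martingale term $\widehat{X}_-\cdot Z$ plus finite-variation contributions that the dual predictable-projection correction is engineered to combine into a further $\PP$-local martingale; invoking (ii) then concludes.

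The main obstacle is the bookkeeping around $\eta$: under $\PP$, $[X,Z]$ need not have locally integrable variation and $Z$ can jump to zero at $\eta$, so $\tfrac{1}{Z}\cdot[X,Z]$ is not a priori defined globally. The term $(\Delta X_\eta\ind_{\dbraco{\eta,\infty}})^{p,\PP}$ is precisely the compensator of the single jump that cannot be absorbed into $\tfrac{1}{Z}\cdot[X,Z]$, and stopping strictly before $\eta$ via $(\tau_n)$ sidesteps the $\PP$-side singularity while $\QQ$-a.s.\ finiteness of $\tau_n$ ensures that (ii) lifts the conclusion to all of $\R_+$ under $\QQ$.
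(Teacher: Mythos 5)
The paper itself does not prove this proposition: it is recalled verbatim from \cite{Leng77} (see also \cite[Theorems 12.12 and 12.20]{HWY}), with only a footnote checking that Lenglart's argument survives without completing the filtration. So your attempt must stand on its own. Parts (i) and (ii) are essentially the classical proof and are sound: the Bayes identity, the observation that $(ZX)^{\sigma_n}=(ZX^{\sigma_n})^{\sigma_n}$, the use of $\EE_{\PP}[Z_t\mid\cF_{t\wedge\theta}]=Z_{t\wedge\theta}$ to pass from $(ZX)^{\theta}\in\MP$ to $ZX^{\theta}\in\MP$, and the diagonal extraction are all correct (you should still say a word about path regularity, which is where the paper needs its Lemma \ref{lem:RC}).

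Part (iii), however, has a genuine gap at its crux. First, your stopping times do not do what you claim: on $\Lambda$ the process $Z_{t-}$ is bounded away from $0$ on $[0,\zeta]$ and vanishes after $\zeta$, so $\tau_n=\inf\{t:Z_{t-}<1/n\}\wedge n$ equals $\eta\wedge n$ for $n$ large, not something strictly smaller than $\eta$. Hence the $\PP$-side singularity is \emph{not} sidestepped: at $\eta$ on $\Lambda$ one has $Z_{\eta}=0$ while $\Delta[X,Z]_{\eta}=-Z_{\eta-}\Delta X_{\eta}$, so the literal expression $\frac1Z\cdot[X,Z]$ is infinite at $\eta$ under $\PP$ and $(Z\widehat X)^{\tau_n}$ is not even defined until you fix a $\PP$-version, say $V:=\frac{\ind_{\{Z>0\}}}{Z}\cdot[X,Z]$ (which agrees with \eqref{eq:transform} $\QQ$-a.s.). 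Second, the sentence asserting that the finite-variation contributions are ``engineered to combine into a further $\PP$-local martingale'' is the entire content of the theorem and is not carried out. Concretely, with the corrected version one finds that $[Z,X]-\frac{Z_-}{Z}\ind_{\{Z>0\}}\cdot[X,Z]-[Z,V]$ cancels everywhere except at $\eta$, leaving the residue $R=-Z_{\eta-}\Delta X_{\eta}\ind_{\dbraco{\eta,+\infty}}$; writing $J:=\Delta X_{\eta}\ind_{\dbraco{\eta,+\infty}}$ and $A:=J^{p,\PP}$, one must then check that $J\in\Aloc(\PP)$ (which uses $X\in\mMlocP$), that $Z\cdot J=0$ because $Z_{\eta}=0$ on $\{\eta<+\infty\}$, and that $[Z,A]\in\mMlocP$ by Yoeurp's lemma, so that $R+Z\cdot A=-Z_-\cdot(J-A)+[Z,A]\in\mMlocP$ and hence $(Z\widehat X)^{\tau_n}=\bigl(Z_-\cdot X+\widehat X_-\cdot Z-Z_-\cdot(J-A)+[Z,A]\bigr)^{\tau_n}\in\mMlocP$. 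None of these steps appears in your sketch; you should either supply this computation (together with the check that $\widehat X_-\ind_{\dbra{0,\tau_n}}$ is locally bounded so that $\widehat X_-\cdot Z$ makes sense) or, as the paper does, cite \cite{Leng77}.
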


As mentioned in the introduction, part (iii) of the above proposition represents the most general formulation of Girsanov's theorem. In particular, unlike the usual version of Girsanov's theorem (see \cite[Theorem III.3.11]{JS03}), it does not rely on the assumption $[X,Z]\in\Aloc(\PP)$.
For a generic element $M\in\MlocP$, we denote by $\widehat{M}$ the element of $\MlocQ$ defined via the right-hand side of \eqref{eq:transform}, to which we refer as the {\em Lenglart transformation} of $M$. We use an analogous notation in the case of vector-valued processes.
Similarly, for a set $\mathcal{Y}\subseteq\MlocP$, we let $\widehat{\mathcal{Y}}:=\{\widehat{Y} : Y\in\mathcal{Y}\}$.

\subsection{Main results}	\label{sec:main}

Our first main result provides a characterization of the set of $\cH^1$-martingales under $\QQ$ as the stable subspace generated by $\widehat{\MP}$ in $\cH^1(\QQ)$.

\begin{thm}	\label{thm1}
$\cH^1_0(\QQ)=\cL^1(\widehat{\MP},\QQ)$.
As a consequence, it holds that $\mMlocQzero=\cL_{{\rm loc}}^1(\widehat{\MP},\QQ)$.
\end{thm}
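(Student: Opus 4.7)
The inclusion $\cL^1(\widehat{\MP},\QQ)\subseteq\cH^1_0(\QQ)$ is immediate: Proposition \ref{prop:Leng}(iii) yields $\widehat Y\in\mMlocQ$ for every $Y\in\MP$, and any stochastic integral against $\widehat Y$ that lies in $\cH^1(\QQ)$ automatically belongs to the closed subspace $\cH^1_0(\QQ)$, which in turn contains the stable subspace generated.

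For the converse inclusion my plan is constructive. Fix $N\in\cH^1_0(\QQ)$; by density of bounded $\QQ$-martingales in $\cH^1(\QQ)$ and closedness of stable subspaces, I reduce to the case that $N$ is bounded. For such $N$, I introduce
\begin{equation*}
Y := ZN - N_{-}\cdot Z \;=\; Z_{-}\cdot N + [Z,N],
\end{equation*}
the second equality being integration by parts. Boundedness of $N$ together with Proposition \ref{prop:Leng}(i) gives $ZN\in\MP$, while local boundedness of $N_-$ gives $N_-\cdot Z\in\mMlocP$, so $Y\in\mMlocP$. I then claim the explicit identity $\widehat Y = Z_-\cdot N$. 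To prove it I compute each term in \eqref{eq:transform}: a direct calculation of $\Delta(ZN)$ and $\Delta(N_-\cdot Z)$ gives $\Delta Y_t = Z_t\,\Delta N_t$, and since by definition of $\eta$ in \eqref{stimes} we have $Z_\eta=0$ on $\Lambda$ and $\eta=+\infty$ on $\Omega\setminus\Lambda$, the process $\Delta Y_\eta\,\ind_{\dbraco{\eta,+\infty}}$ vanishes identically, so does its $\PP$-dual predictable projection. Bilinearity of the quadratic covariation yields
\begin{equation*}
[Y,Z] \;=\; Z_{-}\cdot[N,Z] + \sum\nolimits_{s}(\Delta Z_s)^2\Delta N_s,
\end{equation*}
and a separate treatment of the continuous and purely discontinuous parts (using $\Delta[Y,Z]_s = Z_s\,\Delta Z_s\,\Delta N_s$) gives $\tfrac{1}{Z}\cdot[Y,Z]=[Z,N]$. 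Substituting both computations into \eqref{eq:transform} produces $\widehat Y = (Z_-\cdot N + [Z,N]) - [Z,N] + 0 = Z_-\cdot N$. Since $Z_->0$ $\QQ$-a.s., the process $H:=1/Z_-$ is $\QQ$-predictable and locally bounded, and one concludes $N = H\cdot\widehat Y$.

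Because $Y$ is a priori only in $\mMlocP$, I would then pick a $\PP$-localizing sequence $(\sigma_n)$ with $Y^{\sigma_n}\in\MP$. Stopping commutes with the Lenglart transformation in this situation: indeed, in view of $[Y^{\sigma_n},Z]=[Y,Z]^{\sigma_n}$, the identity $(A^{\sigma_n})^{p,\PP}=(A^{p,\PP})^{\sigma_n}$ for predictable stopping operations, and the vanishing of $\Delta Y_\eta$ already established, one obtains $\widehat{Y^{\sigma_n}} = Z_-\cdot N^{\sigma_n}$. Hence $N^{\sigma_n} = (1/Z_-)\cdot\widehat{Y^{\sigma_n}}\in\cL^1(\widehat{\MP},\QQ)$, and passing to the limit in $\cH^1(\QQ)$ gives $N\in\cL^1(\widehat{\MP},\QQ)$. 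The local statement $\mMlocQzero = \cL_{\text{loc}}^1(\widehat{\MP},\QQ)$ then follows by a routine localization of the first equality. The main obstacle is the key identity $\widehat Y = Z_-\cdot N$, and in particular the vanishing of the $\PP$-dual predictable projection term at $\eta$: this cancellation rests critically on the precise structure of $\eta$ built into \eqref{stimes} and is exactly what the Lenglart extension of Girsanov's theorem is designed to capture, so that the classical Girsanov formula (which would require $[Y,Z]\in\Aloc(\PP)$) would not suffice.
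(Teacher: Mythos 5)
Your argument is correct in substance, but it takes a genuinely different logical route from the paper's, so it is worth comparing the two. Both proofs share the same computational core: the integration-by-parts identity relating $N$, $ZN$ and $Z$, together with the observation that the jump at $\eta$ of the relevant $\PP$-local martingale is killed by $Z_{\eta}=0$ on $\Lambda$, so that the dual predictable projection term in \eqref{eq:transform} vanishes and no integrability of the bracket with $Z$ is needed. Indeed, if you expand your key identity via Lemma \ref{lem:stoch_int}, $\widehat{Y}=\widehat{ZN}-N_-\cdot\widehat{Z}$, then $N=(1/Z_-)\cdot\widehat{Y}$ becomes exactly the paper's display \eqref{eq:int_parts}, $N=\frac{1}{Z_-}\cdot\widehat{ZN}-\frac{N_-}{Z_-}\cdot\widehat{Z}$. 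The difference lies in what is done with this identity: the paper feeds it into Jacod's duality criterion (\cite[Corollary 4.12]{J79}), showing that any bounded $N\in\MQzero$ with $N\widehat{M}\in\MlocQ$ for all $M\in\MP$ satisfies $\langle N\rangle^{\QQ}\equiv0$ and hence is null; you instead use the identity directly as a membership certificate, representing $N$ as a stochastic integral of the Lenglart transform of a single $\PP$-local martingale $Y=ZN-N_-\cdot Z$, and close the argument by localization, density of bounded martingales in $\cH^1_0(\QQ)$ and closedness of stable subspaces. Your route is more constructive---it exhibits the representing integrand $1/Z_-$ explicitly and anticipates Remark \ref{rem:integrand}---while the paper's route avoids the approximation and limit-passage steps. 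One point you should make explicit: to define $Y$ as a $\PP$-local martingale you need $N_-$ to be a $\PP$-a.s.\ well-defined, $\PP$-predictable and bounded process, whereas $N$ is only guaranteed to be $\QQ$-a.s.\ c\`adl\`ag; by Lemma \ref{lem:RC} it is $\PP$-a.s.\ c\`adl\`ag only on $\dbraco{0,\zeta}$, so one must work with a predictable version of $N_-\ind_{\dbraoc{0,\zeta}}$. This suffices because $\ud Z$ is carried by $\dbra{0,\zeta}$ and $N_{\zeta-}=(ZN)_{\zeta-}/Z_{\zeta-}$ exists $\PP$-a.s.\ on $\Lambda$ (using that $ZN\in\MP$ is $\PP$-a.s.\ c\`adl\`ag), but in the non-complete, non-equivalent setting of the paper this is precisely the kind of step that must be spelled out; it is a repairable technicality, not a gap in the idea.
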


The above theorem shows that all $\QQ$-local martingales are generated by stochastic integrals of elements $\widehat{M}$, with $M\in\MP$. Loosely speaking, we can say that $\QQ$-martingales correspond to Lenglart transformations of $\PP$-martingales.
We want to emphasize that, despite the generality of the statement, the proof relies on rather basic facts of stochastic calculus, notably integration by parts and It\^o's formula (see Section \ref{sec:proofs_main}). 

Theorem \ref{thm1} does not assume any structure on the filtered probability space $(\Omega,\cF,\FF,\PP)$. An especially important case is when all $\PP$-local martingales can be represented as stochastic integrals of some fixed $\PP$-local martingale. 
More precisely, let us formulate the following definition.

\begin{defn}
\label{df:mrp}
We say that an $\R^d$-valued $\PP$-local martingale $X$ has the {\em martingale representation property} (MRP) under $\PP$ if $\mMlocPzero=\{H\cdot X : H\in L_m(X,\PP)\}$.
\end{defn}
Our second main result asserts the stability of the MRP under locally absolutely continuous changes of probability in its most general form, without any further assumption.

\begin{thm}	\label{thm2}
Suppose that there exists an $\R^d$-valued local martingale $X$ on $(\Omega,\cF,\FF,\PP)$ having the MRP under $\PP$. Then the process $\widehat{X}$ has the MRP under $\QQ$.
\end{thm}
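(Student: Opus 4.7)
The plan is to combine Theorem \ref{thm1} with the MRP of $X$ under $\PP$, via a commutation identity between the Lenglart transformation and stochastic integration. By Theorem \ref{thm1}, $\MlocQzero = \cLloc^1(\widehat{\MP}, \QQ)$, so it suffices to show that every element of the form $\widehat{M}$ with $M \in \MP$ can be represented as a stochastic integral with respect to $\widehat{X}$ under $\QQ$; the collection of such stochastic integrals is automatically a stable subspace that is closed under localization, so containment of $\widehat{\MP}$ will transfer to all of $\cLloc^1(\widehat{\MP},\QQ)$.

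The first step is to apply the MRP of $X$ under $\PP$ to write $M - M_0 = H \cdot X$ for some $H \in L_m(X, \PP)$. The heart of the proof is then the commutation identity $\widehat{H \cdot X} = H \cdot \widehat{X}$, interpreted as an equality of $\QQ$-local martingales. Using the explicit formula \eqref{eq:transform} together with the standard identities $[H\cdot X,Z]=H\cdot[X,Z]$ and $\Delta(H\cdot X)_\eta=H_\eta\Delta X_\eta$, and associativity of the stochastic integral, the two expansions agree modulo the identity
\begin{equation*}
\bigl(H_\eta\Delta X_\eta\ind_{\dbraco{\eta,+\infty}}\bigr)^{p,\PP}=H\cdot\bigl(\Delta X_\eta\ind_{\dbraco{\eta,+\infty}}\bigr)^{p,\PP},
\end{equation*}
which is the classical commutation of the dual predictable projection with stochastic integration by a predictable process.

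The main technical obstacle is integrability: one must verify that $H\in L_m(\widehat X,\QQ)$ and that the individual terms in the decomposition of $\widehat{H\cdot X}$ are separately meaningful as semimartingales. Since $H\in L_m(X,\PP)$ does not a priori yield integrability against the drift $(\Delta X_\eta\ind_{\dbraco{\eta,+\infty}})^{p,\PP}$, I would proceed by localization, first stopping along a sequence for which $H\cdot X$ belongs to $\cH^1(\PP)$ (via Davis' inequality, applied to $|H\cdot X|^*$), and then transferring the stopping sequence to the $\QQ$-local martingale regime by means of Proposition \ref{prop:Leng}(ii). Within each localized piece the commutation identity can be verified term by term, and the global equality then follows by passing to the limit along the stopping sequence.

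Once the commutation identity is established, one reads off $\widehat{M}-M_0=H\cdot\widehat{X}$, so every $\widehat{M}$ lies in the set of stochastic integrals with respect to $\widehat{X}$ under $\QQ$. Combined with Theorem \ref{thm1} and the stability of this set under localization, this gives $\MlocQzero=\{K\cdot\widehat{X}:K\in L_m(\widehat{X},\QQ)\}$, i.e., the MRP of $\widehat{X}$ under $\QQ$; as a by-product, the integrand is explicitly inherited from the $\PP$-MRP of $X$, as alluded to in Remark \ref{rem:integrand}.
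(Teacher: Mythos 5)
Your overall strategy coincides with the paper's: both proofs reduce Theorem \ref{thm2} to Theorem \ref{thm1} combined with the commutation identity $\widehat{H\cdot X}=H\cdot\widehat{X}$ (the paper's Lemma \ref{lem:stoch_int}), and then conclude via associativity of the stochastic integral and the fact that $\bigl\{K\cdot\widehat{X}:K\in L_m(\widehat{X},\QQ)\bigr\}$ is a localized stable subspace. So the architecture is right; the issue lies in how you propose to verify the commutation identity.

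You want to expand both $\widehat{H\cdot X}$ and $H\cdot\widehat{X}$ via \eqref{eq:transform} and match terms, after localizing so that $H\cdot X\in\cH^1(\PP)$. A term-by-term expansion of $H\cdot\widehat{X}$ presupposes that $H$ is separately integrable against each of the three pieces $X$, $\frac{1}{Z}\cdot[X,Z]$ and $\bigl(\Delta X_{\eta}\ind_{\dbraco{\eta,+\infty}}\bigr)^{p,\PP}$. This is not implied by $H\in L_m(X,\PP)$, and the proposed localization does not supply it: an $\cH^1(\PP)$ bound on $H\cdot X$ controls $[H\cdot X]^{1/2}$ but not, for instance, $|H|\cdot\mathrm{Var}\bigl((\Delta X_{\eta}\ind_{\dbraco{\eta,+\infty}})^{p,\PP}\bigr)$. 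These points can in principle be repaired (Kunita--Watanabe for the bracket term; the fact that $\Delta M_{\eta}\ind_{\dbraco{\eta,+\infty}}\in\Aloc(\PP)$ for every $M\in\MlocP$ together with the general rule $(H\cdot A)^{p,\PP}=H\cdot A^{p,\PP}$ valid when both $A$ and $H\cdot A$ lie in $\Aloc(\PP)$; and the invariance of semimartingale integrals under $\QQ\abs\PP$), but that is genuine additional work your plan does not contain. The paper avoids it entirely: Lemma \ref{lem:cont_disc} shows that the Lenglart transformation maps $\Mloc^c(\PP)$ into $\Mloc^c(\QQ)$ and $\Mloc^d(\PP)$ into $\Mloc^d(\QQ)$; the continuous case is the classical Girsanov situation; and in the purely discontinuous case one only needs the \emph{pointwise} jump identity $\Delta(\widehat{H\cdot X})=H\,\Delta\widehat{X}$, since two purely discontinuous $\QQ$-local martingales with the same jumps coincide. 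Your key ingredient --- commutation of the dual predictable projection with predictable integrands --- is exactly what is used there, but only at the level of jumps, where no integrability of $H$ against the compensator is required. I would restructure your verification of the commutation lemma along this continuous/purely discontinuous splitting rather than the term-by-term expansion.
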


\begin{rem}[Explicit MRP under $\QQ$]	\label{rem:integrand}
Theorem \ref{thm2} is proved in Section \ref{sec:proofs_main} as a direct consequence of Theorem \ref{thm1}. However, Theorem \ref{thm2} also admits a constructive proof, which provides an explicit description of the integrand appearing in the stochastic integral representation under $\QQ$. 
To this effect, let $X$ be an $\R^d$-valued local martingale  on $(\Omega,\cF,\FF,\PP)$ having the MRP under $\PP$ and let $N$ be an arbitrary element of $\MlocQzero$.
By Proposition \ref{prop:Leng}-(ii), there exists a sequence of stopping times $(\tau_n)_{n\in\N}$ increasing $\QQ$-a.s. to infinity such that $(ZN)^{\tau_n}\in\MlocPzero$, for each $n\in\N$.
Since $X$ has the MRP under $\PP$, there exist $H\in L_m(X,\PP)$ and $K^n\in L_m(X,\PP)$, for each $n\in\N$, such that
\be	\label{eq:MRP_Z}
Z=Z_0+H\cdot X
\qquad\text{and}\qquad
(ZN)^{\tau_n}=K^n\cdot X,
\qquad\text{ for every }n\in\N.
\ee
As shown in Section \ref{sec:proof_rem}, the integrand $\phi\in L_m(\widehat{X},\QQ)$ appearing in the stochastic integral representation $N=\phi\cdot\widehat{X}$ can be explicitly described as 
\be	\label{eq:integrand}\qquad
\phi:=\sum_{n\in\N}\phi^n\ind_{\dbraoc{\tau_{n-1},\tau_n}},
\qquad\text{ where }\;
\phi^n := Z_-^{-1}\left(K^n-N_-H\right)\ind_{\dbra{0,\tau_n}},
\quad\text{ for every }n\in\N,
\ee
where $\tau_0:=0$.
Note that each process $\phi^n$ is well-defined under $\QQ$, since $\QQ(\zeta<+\infty)=0$.
In particular, \eqref{eq:integrand} shows that the integrand appearing in the representation of $N$ under $\QQ$ is fully determined by the integrands appearing in the representations of $Z$ and $(ZN)^{\tau_n}$, for $n\in\N$, under $\PP$.
\end{rem}

\subsection{Further properties and results}	\label{sec:corollaries}

In this section, we present some further results and special cases of interest which can be obtained from the results stated in Section \ref{sec:main}.

\subsubsection{MRP and strongly orthogonal local martingales}	\label{sec:orth}

In martingale representation results, it is typically of interest to establish the  representation property with respect to a family of orthogonal local martingales. To this effect, let us introduce some terminology.
Given $M,N\in\MlocP$, we say that $M$ and $N$ are {\em strongly orthogonal} if $[M,N]\equiv0$ up to a $\PP$-evanescent set (in particular, this implies that $M$ and $N$ are orthogonal in the usual sense of local martingales, i.e., $MN\in\MlocP$, see \cite[Definition I.4.11]{JS03}). If $X$ is an $\R^d$-valued local martingale on $(\Omega,\cF,\FF,\PP)$, we say that it has {\em strongly orthogonal components} if $X^i$ and $X^j$ are strongly orthogonal, for all $i,j=1,\ldots,d$ with $i\neq j$. Under $\QQ$, the notion of strong orthogonality is defined in an analogous way.

In general, if the MRP holds under $\PP$ with respect to a family of strongly orthogonal local martingales, Theorem \ref{thm2} does not ensure that the same holds true under $\QQ$ (see Example \ref{ce:stong} for an explicit counterexample). 
The following proposition provides a sufficient condition for this to hold. 
As a preliminary, let $\eta=\eta^{{\rm ac}}\wedge\eta^{{\rm in}}$ be the unique decomposition of the stopping time $\eta$ into an accessible time $\eta^{{\rm ac}}$ and a totally inaccessible time $\eta^{{\rm in}}$ (see \cite[Theorem I.2.22]{JS03}).

\begin{prop}	\label{prop:orth}
Let $X$ be an $\R^d$-valued local martingale on $(\Omega,\cF,\FF,\PP)$ with strongly orthogonal components under $\PP$.
Assume furthermore that $\Delta X_{\eta^{{\rm ac}}}=0$ $\PP$-a.s. on $\{\eta^{{\rm ac}}<+\infty\}$.
Then the process $\widehat{X}$ has strongly orthogonal components under $\QQ$.
\end{prop}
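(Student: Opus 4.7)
The plan is to exploit the decomposition $\widehat{X}^i = X^i - A^i + B^i$ suggested by \eqref{eq:transform}, where $A^i := \frac{1}{Z}\cdot[X^i,Z]$ and $B^i := (\Delta X^i_\eta \ind_{\dbraco{\eta,+\infty}})^{p,\PP}$ are both of finite variation, and to show that the quadratic covariation $[\widehat{X}^i,\widehat{X}^j]$ vanishes $\PP$-a.s. for every $i\neq j$. Since quadratic covariation is a path property and $\QQ\abs\PP$, this $\PP$-a.s. identity transfers to a $\QQ$-a.s. one, giving the desired strong orthogonality of the components of $\widehat{X}$ under $\QQ$.

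The key step is to verify that $B^i$ is continuous, and this is where the hypothesis $\Delta X^i_{\eta^{\rm ac}}=0$ on $\{\eta^{\rm ac}<+\infty\}$ enters. Writing $\eta=\eta^{\rm ac}\wedge\eta^{\rm in}$ and partitioning $\{\eta<+\infty\}=\{\eta=\eta^{\rm ac}\}\cup\{\eta=\eta^{\rm in}\}$ up to a $\PP$-null set, the assumption forces the raw process $\Delta X^i_\eta\ind_{\dbraco{\eta,+\infty}}$ to carry its only jump on the graph of the totally inaccessible stopping time $\eta^{\rm in}$. By the classical characterization of the jumps of dual predictable projections (see, e.g., \cite[Section I.3]{JS03}), $B^i$ charges only predictable graphs and so has no jump; hence $B^i$ is continuous up to a $\PP$-evanescent set.

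With this in hand, I would expand, using the standard decomposition of the bracket,
\[
[\widehat{X}^i,\widehat{X}^j] \;=\; \langle (X^i)^c,(X^j)^c\rangle + \sum_{s\leq \cdot}\Delta\widehat{X}^i_s\,\Delta\widehat{X}^j_s,
\]
where the identity $(\widehat{X}^i)^c = (X^i)^c$ follows because $A^i$ and $B^i$ are of finite variation. The first summand vanishes by strong orthogonality of the components of $X$ under $\PP$. For the jump sum, continuity of $B^i$ yields $\Delta\widehat{X}^i_s = \Delta X^i_s(1-\Delta Z_s/Z_s)$ on $\{Z>0\}$, so that every summand contains the factor $\Delta X^i_s\Delta X^j_s$, which is null thanks to $[X^i,X^j]\equiv 0$. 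The only delicate aspect I anticipate is the rigorous continuity of $B^i$, for which the decomposition of $\eta$ and the properties of dual predictable projections in a possibly incomplete filtration (cf. Remark \ref{rem:incomplete}) must be handled with care; the subsequent bracket computation is a routine expansion.
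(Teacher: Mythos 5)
Your argument is correct and follows essentially the same route as the paper's proof: isolate the continuous parts (unaffected, as far as the bracket is concerned, by the finite-variation corrections), and show that the dual predictable projection term is continuous because under the hypothesis the projected process jumps only at the totally inaccessible time $\eta^{{\rm in}}$, whence $\Delta\widehat{X}^i=\Delta X^i(1-\Delta Z/Z)$ and the jump products vanish by $\Delta X^i\Delta X^j=0$, which transfers from $\PP$ to $\QQ$ by local absolute continuity. The only cosmetic caveat is that $(\widehat{X}^i)^c$ is not literally equal to $(X^i)^c$ (they differ by the continuous finite-variation process $-\frac{1}{Z}\cdot[(X^i)^c,Z]$, and $(X^i)^c$ need not be a $\QQ$-local martingale); what your computation actually uses, correctly, is the identity $\langle(\widehat{X}^i)^c,(\widehat{X}^j)^c\rangle=[\widehat{X}^i,\widehat{X}^j]^c=[X^i,X^j]^c$.
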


In particular, the assumption that $\Delta X_{\eta^{{\rm ac}}}=0$ $\PP$-a.s. on $\{\eta^{{\rm ac}}<+\infty\}$ always holds in the following cases:
\begin{enumerate}
\item[(i)]
if the set $\{\Delta X\neq0\}\cap\{Z=0<Z_-\}$ is $\PP$-evanescent;
\item[(ii)] 
if $\QQ\equ\PP$, in which case $\PP(\eta=+\infty)=1$;
\item[(iii)] 
if the process $X=(X_t)_{t\geq0}$ is $\PP$-a.s. quasi-left-continuous.
\end{enumerate}

\begin{rem}[An open question of \cite{Zit06}]	\label{rem:Zit}
In case (ii), Proposition \ref{prop:orth} gives a complete answer to an open question formulated in \cite[Remark 2.3]{Zit06}, namely whether the MRP with respect to a local martingale having strongly orthogonal components ({\em finite representation property}, in the terminology of \cite{Zit06}) is stable under equivalent changes of probability.  Proposition \ref{prop:orth}, together with Theorem \ref{thm2}, shows that the answer is always positive, even for locally equivalent changes of probability and without any further assumption on the density process $Z$.
\end{rem}

\begin{rem}
In mathematical finance, if $X$ represents the discounted price process of a set of traded assets, the condition that $\{\Delta X\neq0\}\cap\{Z=0<Z_-\}$ is $\PP$-evanescent appearing in (i) above plays a crucial role in the study of the no-arbitrage properties of $X$ under $\QQ$, see \cite{ACJ15,F14}.
\end{rem}

In the continuous case, there is no distinction between strong orthogonality and orthogonality in the usual sense of local martingales. Hence, as an immediate consequence of Proposition \ref{prop:orth}, we deduce that orthogonality is always preserved under arbitrary absolutely continuous changes of probability for continuous local martingales.
The distinction between the two notions of orthogonality appears in the case of discontinuous local martingales.
In this case, motivated by Proposition \ref{prop:orth}, one may wonder whether orthogonality in the usual sense of local martingales is in general preserved under locally absolutely continuous changes of probability. 
As shown by Example \ref{example:usual_orth}, the answer is negative, even for equivalent changes of probability.

\subsubsection{Locally equivalent changes of probability}

We now consider the special case where the two probability measures $\QQ$ and $\PP$ are locally equivalent, corresponding to the case $\PP(\zeta<+\infty)=0$. In this case, it obviously holds that $\PP(\eta<+\infty)=0$ and Proposition \ref{prop:Leng}-(iii) yields that, for any $M\in\MlocP$, the process $\widehat{M}:=M-Z^{-1}\cdot[M,Z]$ is an element of $\MlocQ$.
In this context, we can establish the following proposition, which relies on the symmetric role of $\QQ$ and $\PP$. 

\begin{prop}	\label{prop:equiv}
Suppose that $\QQ\equ\PP$ and let $X$ be an $\R^d$-valued local martingale on $(\Omega,\cF,\FF,\PP)$. Then $X$ has the MRP under $\PP$ if and only if $\widehat{X}$ has the MRP under $\QQ$.
\end{prop}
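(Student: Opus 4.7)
The $\Rightarrow$ direction follows directly from Theorem \ref{thm2}. For the converse, the plan is to exploit the symmetry of local equivalence and apply Theorem \ref{thm2} a second time, with the roles of $\PP$ and $\QQ$ interchanged.

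First, I would observe that under $\QQ\equ\PP$ we have $\PP(Z_t>0)=1$ for every $t\geq0$, so the process $\widetilde{Z}:=1/Z$ is a well-defined strictly positive c\`adl\`ag process. Using $Z\widetilde{Z}=1$ together with Proposition \ref{prop:Leng}-(i) in the trivial direction, $\widetilde{Z}$ is a $\QQ$-local martingale, and from the identity $\ud\PP|_{\cF_t}=\widetilde{Z}_t\,\ud\QQ|_{\cF_t}$ it plays exactly the role of the density process of $\PP$ relative to $\QQ$; in particular $\PP\equ\QQ$.

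Next, I would apply Theorem \ref{thm2} to the change of probability $\QQ\to\PP$. Since, by hypothesis, $\widehat{X}\in\MlocQ$ has the MRP under $\QQ$, its Lenglart transform going from $\QQ$ back to $\PP$, namely
\[
\widehat{\widehat{X}} \;:=\; \widehat{X} - \widetilde{Z}^{-1}\cdot[\widehat{X},\widetilde{Z}] \;=\; \widehat{X} - Z\cdot[\widehat{X},1/Z],
\]
will then be a $\PP$-local martingale with the MRP under $\PP$; the dual predictable projection term in \eqref{eq:transform} drops out, because local equivalence in both directions makes the corresponding stopping times infinite a.s. under both measures. The proof will conclude once $\widehat{\widehat{X}}=X$ is verified.

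The hard part is precisely this last identity, which I would establish by direct It\^o calculation. Applying It\^o's formula to $\varphi(z)=1/z$ (valid since $Z>0$) gives the continuous martingale part $(1/Z)^c=-Z_{-}^{-2}\cdot Z^c$ and the jump $\Delta(1/Z)=-\Delta Z/(Z Z_{-})$. Combined with the equality $\widehat{X}^c=X^c$ (since $\widehat{X}-X=-Z^{-1}\cdot[X,Z]$ has finite variation, hence no continuous martingale part) and with the jump formula $\Delta\widehat{X}=\Delta X\cdot Z_{-}/Z$ (from the definition of $\widehat{X}$ and $Z=Z_{-}+\Delta Z$), one obtains by pathwise computation of the covariation the identity
\[
Z\cdot[\widehat{X},1/Z] \;=\; -Z^{-1}\cdot[X,Z],
\]
whence $\widehat{\widehat{X}}=\widehat{X}+Z^{-1}\cdot[X,Z]=X$. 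This involutivity of the Lenglart transformation under locally equivalent changes of probability is the only nontrivial step; the rest is symmetry and bookkeeping.
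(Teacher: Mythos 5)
Your proposal is correct and follows essentially the same route as the paper: both exploit the symmetry of local equivalence, apply Theorem \ref{thm2} to the reverse change of probability with density $1/Z$, and reduce the problem to the involutivity identity $\widehat{X}-Z\cdot[\widehat{X},1/Z]=X$. Your verification of that identity via the continuous parts and the jump formulas $\Delta\widehat{X}=\Delta X\,Z_-/Z$ and $\Delta(1/Z)=-\Delta Z/(ZZ_-)$ is just a cosmetic variant of the paper's bilinearity computation, and it is sound.
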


Under the slightly stronger assumption that $\QQ\sim\PP$, a version of Proposition \ref{prop:equiv} has been recently established in \cite{KP17}. 
Note also that, in the case $\QQ\sim\PP$, the decomposition $\widehat{M}=M-(Z)^{-1}\cdot[M,Z]$ corresponds to the version of Girsanov's theorem presented in \cite{Meyer76}.

\subsubsection{Dimension of $\cH^1$-martingale spaces}

In this subsection, we study how the dimension of the martingale space $\cH^1$ behaves under locally absolutely continuous changes of probability. In particular, Proposition \ref{prop:equiv} enables us to prove the invariance of the dimension with respect to locally equivalent changes of probability. 
In line with \cite[Definition 4.38]{J79}, let us recall that an $\R^d$-valued local martingale $X$ on $(\Omega,\cF,\FF,\PP)$ is said to be a {\em basis} for $\cH^1(\PP)$ if $\cL^1(X,\PP)=\cH^1_0(\PP)$ and there exists no $\R^m$-valued local martingale $Y$ on $(\Omega,\cF,\FF,\PP)$, with $m<d$, such that $\cL^1(Y,\PP)=\cH^1_0(\PP)$. In this case, $d$ is said to be the {\em dimension} of $\cH^1(\PP)$, denoted as $\onedim\cH^1(\PP)$.
This is also closely related to the notion of {\em martingale multiplicity} introduced in \cite{DV74}. Under $\QQ$, the notions of basis and dimension are defined in an analogous way.\footnote{We write $\onedim\cH^1(\PP)=+\infty$ if it does not exist a finite-dimensional basis for $\cH^1(\PP)$, and analogously under $\QQ$.} 

\begin{prop}	\label{prop:dim}
If $\QQ\abs\PP$, it holds that $\onedim\cH^1(\QQ)\leq\onedim\cH^1(\PP)$. 
If furthermore $\QQ\equ\PP$, then $\onedim\cH^1(\PP)=\onedim\cH^1(\QQ)$ and an $\R^d$-valued local martingale $X$ on $(\Omega,\cF,\FF,\PP)$ is a basis for $\cH^1(\PP)$ if and only if $\widehat{X}$ is a basis for $\cH^1(\QQ)$.
\end{prop}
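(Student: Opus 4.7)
The plan is to exploit the standard equivalence between the basis condition $\cL^1(X,\PP)=\cH^1_0(\PP)$ and the MRP of Definition \ref{df:mrp} for $X$ under $\PP$ (the $\Rightarrow$ direction being a straightforward localisation argument -- decompose $N\in\MlocPzero$ along a reducing sequence into $\cH^1$-pieces, represent each piece as an integral against $X$, and glue predictably; the $\Leftarrow$ direction following from the very definition of $\cL^1(X,\PP)$). This reduces all three assertions to statements about MRPs, which can then be fed into Theorem \ref{thm2} and Proposition \ref{prop:equiv}.

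For the first inequality, I would assume $d:=\onedim\cH^1(\PP)<+\infty$ (otherwise the claim is vacuous) and fix a $d$-dimensional basis $X$ for $\cH^1(\PP)$. By the equivalence above, $X$ has the MRP under $\PP$; Theorem \ref{thm2} then yields that the $\R^d$-valued process $\widehat{X}$ has the MRP under $\QQ$, whence $\cL^1(\widehat{X},\QQ)=\cH^1_0(\QQ)$ and therefore $\onedim\cH^1(\QQ)\leq d=\onedim\cH^1(\PP)$.

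Under the additional hypothesis $\QQ\equ\PP$, the set $\{Z=0\}\cup\{Z_-=0\}$ is $\PP$-evanescent, so $1/Z$ is well-defined on all of $\R_+\times\Omega$ (up to a null set) and plays the role of the density process of $\PP$ with respect to $\QQ$. The entire setup is therefore symmetric in $\PP$ and $\QQ$: applying the first inequality with the two measures interchanged yields $\onedim\cH^1(\PP)\leq\onedim\cH^1(\QQ)$, and combined with the previous step this gives the equality. The basis characterisation then follows from the same symmetry: if $X$ is a basis for $\cH^1(\PP)$, then $\widehat{X}$ is $\R^d$-valued with $d=\onedim\cH^1(\QQ)$ and already generates $\cH^1_0(\QQ)$, so no lower-dimensional generator can exist and $\widehat{X}$ is automatically a basis for $\cH^1(\QQ)$; the converse implication is handled by exchanging the roles of $\PP$ and $\QQ$ and invoking Proposition \ref{prop:equiv} to recover the MRP of $X$ under $\PP$ from that of $\widehat{X}$ under $\QQ$.

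The delicate point is the symmetry argument in the locally equivalent case: one needs to verify carefully, without any completeness assumption on $\FF$, that $1/Z$ genuinely serves as the density process of $\PP$ with respect to $\QQ$ and that the Lenglart transformation from $\PP$ to $\QQ$ and the one from $\QQ$ to $\PP$ are mutual inverses on local martingales -- both facts being essentially recorded in the hypothesis of Proposition \ref{prop:equiv}. Once this symmetry is in place, every remaining step reduces to bookkeeping with Theorem \ref{thm2} and Proposition \ref{prop:equiv}.
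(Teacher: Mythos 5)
Your proposal is correct and follows essentially the same route as the paper: reduce the basis condition to the MRP (via the standard identification $\cL^1_{\rm loc}(X,\PP)=\{H\cdot X: H\in L_m(X,\PP)\}$, which the paper uses implicitly through \cite[Theorem 4.6]{J79}), apply Theorem \ref{thm2} to get $\cL^1(\widehat{X},\QQ)=\cH^1_0(\QQ)$ and hence the inequality, and then invoke the symmetry of the locally equivalent case together with Proposition \ref{prop:equiv} for the reverse inequality and the basis characterisation. The ``delicate point'' you flag (that $1/Z$ is the density process of $\PP$ relative to $\QQ$) is exactly what the paper delegates to the proof of Proposition \ref{prop:equiv}, so nothing further is needed.
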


This last result generalizes \cite[Theorem 3.2 and its Corollary]{Duffie85} by removing all restrictive boundedness assumptions on the density process $Z$.

\section{Examples}	\label{sec:example}

\subsection{An example of MRP when $\boldsymbol{[X,Z]\notin}\Aloc\boldsymbol{(\PP)}$}	\label{sec:example1}

In this subsection, we present an example of a simple situation where classical results on the stability of the MRP under absolutely continuous changes of probability cannot be applied, while on the contrary our Theorem \ref{thm2} yields the explicit existence of a process having the MRP.

On a probability space $(\Omega,\cF,\PP)$, let $N=(N_t)_{t\geq0}$ be a standard Poisson process with intensity $1$ on its natural filtration $\FF=(\cF_t)_{t\geq0}$ and denote by $M=(M_t)_{t\geq0}$ the associated compensated martingale, i.e. $M_t:=N_t-t$, for all $t\geq0$. It is well-known that $M$ has the MRP under $\PP$ (see, e.g., \cite[Proposition 8.3.5.1]{JYC09}).
Let $\tau_1:=\inf\{t\in\R_+ : N_t>0\}$ be the first jump time of $N$. By \cite[Lemma 13.8]{HWY}, the stopped martingale $M^{\tau_1}$ has the MRP on $(\Omega,\cF,\FF^{\tau_1},\PP)$, where $\FF^{\tau_1}$ denotes the stopped filtration $(\cF_{t\wedge \tau_1})_{t\geq0}$.
We then define the process $X=(X_t)_{t\geq0}$ by
\[
X_t := \int_0^t\frac{1}{\sqrt{u}}\,\ud M^{\tau_1}_u,
\qquad\text{ for all }t\geq0.
\]
It holds that $X\in\cH^1_0(\PP)$, as follows from the fact that
\[
\EE\bigl[[X]_{\infty}^{1/2}\bigr]
= \EE\left[\left(\int_0^{\infty}\!\frac{1}{u}\,\ud[M]^{\tau_1}_u\right)^{1/2}\right]
= \EE\left[\frac{1}{\sqrt{\tau_1}}\right]
= \int_0^{+\infty}\frac{e^{-u}}{\sqrt{u}}\,\ud u = \sqrt{\pi} <+\infty.
\]
Moreover, since the integrand $1/\sqrt{u}$ is strictly positive, it is immediate to verify that the martingale $X$ inherits the MRP of $M^{\tau_1}$ under $\PP$ in the filtration $\FF^{\tau_1}$.

For a constant $T\in(0,1/4]$, let us define the uniformly integrable martingale $Z:=1+X^T$ on $(\Omega,\cF,\FF^{\tau_1},\PP)$. Note that
\[
Z_{\infty} 
= 1+X_T
= 1+ \ind_{\{\tau_1\leq T\}}\left(\frac{1}{\sqrt{\tau_1}}-2\sqrt{\tau_1}\right) - \ind_{\{\tau_1>T\}}2\sqrt{T} \geq0,
\]
with $\PP(Z_{\infty}=0)=\PP(\tau_1>1/4)>0$ holding for $T=1/4$. We can therefore define the probability measure $\QQ\ll\PP$ by $\ud\QQ:=Z_{\infty}\ud\PP$. Note that $\QQ\sim\PP$ holds if and only if $T<1/4$.

In the context of the present example, existing results (such as \cite[Theorem 3.2]{Duffie85}, \cite[Theorem 13.12]{HWY}, \cite[Theorem 15.2.8]{CohenElliott} or \cite[Lemma 2.5]{JS15}) cannot be applied to deduce the existence of a process having the MRP under $\QQ$, since the process $[X,Z]$ fails to be locally integrable under $\PP$. 
Indeed, for every $t\in(0,T]$, it holds that
\[
\EE\bigl[[X,Z]_t\bigr]
= \EE\bigl[[X]_t\bigr]
= \EE\left[\int_0^t\frac{1}{u}\ud[M]^{\tau_1}_u\right]
= \EE\left[\frac{1}{\tau_1}\ind_{\{\tau_1\leq t\}}\right]
= \int_0^t\frac{e^{-u}}{u}\ud u = +\infty,
\]
which in turn implies that $\EE[[X,Z]_{\sigma}]=+\infty$ for every stopping time $\sigma$ with $\PP(\sigma>0)>0$.

However, as a consequence of Theorem \ref{thm2}, the process $\widehat{X}$ has the MRP under $\QQ$ and, in view of Proposition \ref{prop:Leng}-(iii), it can be explicitly computed as follows.
Note that $\eta=+\infty$ for all $T\in(0,1/4]$, so that $\Delta X_{\eta}\ind_{\{\eta<+\infty\}}=0$. Therefore, for all $t\geq0$, it holds that
\begin{align*}
\widehat{X}_t 
&= X_t - \int_0^t\frac{1}{Z_u}\ud[X,Z]_u
= X_t - \frac{1}{\tau_1Z_{\tau_1}}\ind_{\{\tau_1\leq t\wedge T\}}	\\
&= \ind_{\{\tau_1\leq t\}}\left(\frac{1}{\sqrt{\tau_1}}-2\sqrt{\tau_1}\right) - \ind_{\{\tau_1>t\}}2\sqrt{t}
- \frac{1}{\sqrt{\tau_1}\left(1+\sqrt{\tau_1}-2\tau_1\right)}\ind_{\{\tau_1\leq t\wedge T\}}.
\end{align*}

\subsection{Further examples}		\label{sec:further_examples}

In this section, we present two counterexamples related to the notions of strong orthogonality and orthogonality in the usual sense of local martingales, which appear in the context of Proposition \ref{prop:orth}.

\begin{exa}		\label{ce:stong}
On a probability space $(\Omega,\cF,\PP)$, consider two random variables $\varepsilon$ and $\xi$ of the form
\[
\varepsilon = \begin{cases}
1, & \text{ with probability }p\in(0,1),\\
0, & \text{ with probability }1-p,
\end{cases}
\qquad\text{ and }\qquad
\xi = \begin{cases}
+1, & \text{ with probability }1/2,\\
-1, & \text{ with probability }1/2,
\end{cases}
\]
and such that $\varepsilon$ and $\xi$ are independent.
Define the processes $X$ and $Y$ by 
\[
X := 1 + \varepsilon\xi\ind_{\dbraco{1,+\infty}}
\qquad\text{ and }\qquad
Y := 1 + (1-\varepsilon)\xi\ind_{\dbraco{1,+\infty}},
\]
and let $\FF$ be the associated natural filtration.
Clearly, $X$ and $Y$ are martingales on $(\Omega,\cF,\FF,\PP)$. Moreover, $X$ and $Y$ are strongly orthogonal under $\PP$, since
\[
[X,Y] 
= \sum_{0<s\leq\cdot} \Delta X_s\Delta Y_s
= \varepsilon(1-\varepsilon)\ind_{\dbraco{1,+\infty}} = 0.
\]
Define the probability measure $\QQ\ll\PP$ by $\ud\QQ = X_1\ud\PP$, with density process $X$. In this case,
\[
\eta = \ind_{\{\varepsilon=1,\xi=-1\}} + \infty\ind_{\{\varepsilon=0\}\cup\{\xi=+1\}},
\]
which is an accessible time (i.e., $\eta=\eta^{{\rm ac}}$).
Moreover, we have that
\[
A^X := (\Delta X_{\eta}\ind_{\dbraco{\eta,+\infty}})^{p,\PP}
= -(\ind_{\{\varepsilon=1,\xi=-1\}}\ind_{\dbraco{1,+\infty}})^{p,\PP}
= -\PP(\varepsilon=1,\xi=-1|\cF_{1-})\ind_{\dbraco{1,+\infty}}
= -\frac{p}{2}\ind_{\dbraco{1,+\infty}}.
\]
Therefore, it holds that
\[
\Delta Y\Delta A^X
= -\frac{p}{2}(1-\varepsilon)\xi\ind_{\dbra{1}}.
\]
In particular, since $\Delta A^Y=0$ $\PP$-a.s., this implies that $[\widehat{X},\widehat{Y}]\neq0$, thus showing that $\widehat{X}$ and $\widehat{Y}$ are not strongly orthogonal under $\QQ$.
Observe that, in this example, the condition $\Delta X_{\eta^{{\rm ac}}}=0$ $\PP$-a.s. on $\{\eta^{\rm ac}<+\infty\}$ fails to hold, thus showing its necessity in the statement of Proposition \ref{prop:orth}.
\end{exa}

\begin{exa}	\label{example:usual_orth}
In this example, we construct a bi-dimensional local martingale $(X,Y)$ on a filtered probability space $(\Omega,\cF,\FF,\PP)$ such that $XY\in\MlocP$, while $\widehat{X}\widehat{Y}\notin\MlocQ$ for some probability measure $\QQ\sim\PP$. In particular, this shows that one cannot obtain a version of Proposition \ref{prop:orth} for the usual notion of orthogonality in the case of general discontinuous local martingales.

On a filtered probability space $(\Omega,\cF,\FF,\PP)$, let $W=(W_t)_{t\geq0}$ be a standard Brownian motion and $M=(M_t)_{t\geq0}$ a compensated Poisson process with intensity 1.
Define the two processes
\[
X := \cE(W+M)
\qquad\text{ and }\qquad
Y:= \cE(W-M),
\]
which are martingales on $(\Omega,\cF,\FF,\PP)$ and admit explicit solutions
\[
X_t = e^{W_t-\frac{3}{2}t}2^{N_t}
\qquad\text{ and }\qquad
Y_t = e^{W_t+\frac{t}{2}}\ind_{\{t<\tau_1\}},
\qquad\text{ for all }t\geq0,
\]
where $\tau=\inf\{t\in\R_+ : \Delta M_t\neq0\}$.
Note that $\Delta X_{\tau}=X_{\tau-}$ and $\Delta Y_{\tau}=-Y_{\tau-}$. 
The $\PP$-martingales $X$ and $Y$ are orthogonal (in the usual sense, but not strongly orthogonal), indeed:
\[
[X,Y] = X_-Y_-\cdot[W+M,W-M]  = -X_-Y_-\cdot M \in \MlocP.
\]
Define now the probability measure $\QQ\sim\PP$ by $\ud\QQ = X_{\tau}\ud\PP$, with density process $X^{\tau}$. 
The $\QQ$-local martingales $\widehat{X}$ and $\widehat{Y}$ are orthogonal under $\QQ$ (in the usual sense of local martingales) if and only if $V:=[\widehat{X},\widehat{Y}]X^{\tau}\in\MlocP$.
By integration by parts and Yoeurp's lemma, denoting by $\underset{{\rm loc.mart.}}{=}$ equality up to a $\PP$-local martingale term which may change from line to line, it holds that
\begin{align*}
V &\underset{{\rm loc.mart.}}{=} 
X^{\tau}_-\cdot[\widehat{X},\widehat{Y}] + \bigl[X,[\widehat{X},\widehat{Y}]\bigr]^{\tau}
= X\cdot[\widehat{X},\widehat{Y}]^{\tau}
= X\cdot\left[X-\frac{1}{X}\cdot[X],Y-\frac{1}{X}\cdot[X,Y]\right]^{\tau}	\\
&\underset{\phantom{{\rm loc.mart.}}}{=} 
X\cdot[X,Y]^{\tau} - 2(\Delta X_{\tau})^2\Delta Y_{\tau}\ind_{\dbraco{\tau,+\infty}} + \frac{1}{X_{\tau}}\Delta Y_{\tau}(\Delta X_{\tau})^3\ind_{\dbraco{\tau,+\infty}}	\\
&\underset{{\rm loc.mart.}}{=} 
(\Delta X_{\tau})^2\Delta Y_{\tau}\left(\frac{\Delta X_{\tau}}{X_{\tau}}-1\right)\ind_{\dbraco{\tau,+\infty}}
= \frac{1}{2}(X_{\tau-})^2Y_{\tau-}\ind_{\dbraco{\tau,+\infty}}.
\end{align*}
Since $X_{\tau-}Y_{\tau-}>0$ a.s., this shows that the process $V$ cannot be a local martingale under $\PP$, so that $\widehat{X}$ and $\widehat{Y}$ are not orthogonal under $\QQ$ in the usual sense of local martingales.
\end{exa}

\section{Proofs}		\label{sec:proofs}

In this section, we give the proofs of our results, together with some auxiliary technical results.
As a preliminary, let us define the probability measure $\overline{\PP}:=(\PP+\QQ)/2$ on $(\Omega,\cF)$ and denote by $\FF^{\overline{\PP}}$ the $\overline{\PP}$-completion of the filtration $\FF$. 
For $R\in\{\PP,\QQ\}$, if $X$ is an $\FF$-adapted $R$-a.s. c\`adl\`ag process, then $X$ is $R$-indistinguishable from an $\FF^{\overline{\PP}}$-optional process. Therefore, without loss of generality, we shall take $\FF^{\overline{\PP}}$-optional versions of all relevant processes.
We then recall \cite[Lemma 7.21]{J79}, which ensures that path properties are preserved under locally absolutely continuous changes of probability.

\begin{lem}	\label{lem:RC}
Let $X$ be an $\FF^{\overline{\PP}}$-optional process. Then the following are equivalent:
\begin{enumerate}
\item[(i)] $X$ has $\PP$-a.s. c\`adl\`ag paths on $\dbraco{0,\zeta}$;
\item[(ii)] $X$ has $\QQ$-a.s. c\`adl\`ag paths.
\end{enumerate}
\end{lem}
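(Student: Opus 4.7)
The plan is to unify the two conditions into a single statement and then use a countable decomposition of the bad event into $\cF_T$-measurable pieces on which the Radon--Nikodym relation between $\PP$ and $\QQ$ can be inverted locally.

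First I would unify (i) and (ii). Setting $B:=\{\omega : X_\cdot(\omega) \text{ is c\`adl\`ag on } [0,\zeta(\omega))\}$, the property $\QQ(\zeta<+\infty)=0$ yields $\QQ$-a.s.\ $\dbraco{0,\zeta}=\R_+$, so the event in (ii) coincides with $B$ up to a $\QQ$-null set, while $\{\zeta=0\}\subseteq B$ trivially. The lemma thus reduces to $\PP(B)=1 \Leftrightarrow \QQ(B)=1$.

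Next comes the key decomposition. For each positive rational $T$, set
\[
A_T:=\{X\text{ not c\`adl\`ag on }[0,T]\}\cap\{\zeta>T\}.
\]
If $\omega\in B^c$, then $\zeta(\omega)>0$ and some c\`adl\`ag condition for $X_\cdot(\omega)$ fails at a point $s\in[0,\zeta(\omega))$; choosing any rational $T\in[s,\zeta(\omega))$ places $\omega$ in $A_T$, and the reverse inclusion is immediate. Hence $B^c=\bigcup_T A_T$, the union being over positive rationals. By adaptedness of $X$ and right-continuity of $\FF$, the event ``$X$ not c\`adl\`ag on $[0,T]$'' can be expressed through countable conditions on $(X_q)$ with $q$ rational in $[0,T]$, and therefore $A_T\in\cF_T$.

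It then remains to check $\PP(A_T)=0 \Leftrightarrow \QQ(A_T)=0$ for each such $T$. The forward implication is immediate from $\QQ|_{\cF_T}\ll\PP|_{\cF_T}$, giving (i)$\Rightarrow$(ii). For the converse, the inclusion $A_T\subseteq\{\zeta>T\}$ forces $Z_T>0$ on $A_T$, whence
\[
\PP(A_T)=\EE\!\left[\frac{\ind_{A_T}}{Z_T}\,Z_T\right]=\EE_\QQ\!\left[\frac{\ind_{A_T}}{Z_T}\right],
\]
which vanishes as soon as $\QQ(A_T)=0$, giving (ii)$\Rightarrow$(i). The one delicate point I foresee is the $\cF_T$-measurability of the non-c\`adl\`ag event in an incomplete filtration; this is handled by reducing the c\`adl\`ag condition to countable tests on rational time values, exploiting the right-continuity of $\FF$ as pointed out in Remark \ref{rem:incomplete}.
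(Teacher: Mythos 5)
Your localization strategy --- decomposing the bad event over rational horizons $T$, using $\QQ|_{\cF_T}\ll\PP|_{\cF_T}$ in one direction and inverting the density via $Z_T>0$ on $\{\zeta>T\}$ in the other --- is a sensible skeleton, and the $1/Z_T$ inversion is exactly the right device for the non-equivalent case. However, there is a genuine gap at the step you yourself flag as delicate: the event ``$X$ is not c\`adl\`ag on $[0,T]$'' is \emph{not} expressible through countable conditions on the values $X_q$ for rational $q\in[0,T]$, and for a general adapted process it need not belong to $\cF_T$, nor even to $\cF$. For instance, if $\xi$ is a random variable with diffuse law and $X_t=\ind_{\{t=\xi\}}$ with the constant filtration $\cF_t=\sigma(\xi)$, then $X_q(\omega)=0$ for every rational $q$ whenever $\xi(\omega)$ is irrational, yet no path is c\`adl\`ag; no countable family of time marginals can detect this. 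Adaptedness and right-continuity of $\FF$ do not help here, because the obstruction concerns the behavior of the path between the sampled times, not the $\sigma$-algebra generated by the samples.

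This is not a cosmetic issue, since both implications hinge on $A_T\in\cF_T$. For (i)$\Rightarrow$(ii), a $\PP$-null set in $\cF$ covering $B^c$ need not be $\QQ$-null, because $\QQ\abs\PP$ gives no control beyond the $\cF_t$'s (two coin-tossing measures can be locally equivalent yet mutually singular on $\cF_{\infty-}$). For (ii)$\Rightarrow$(i), the identity $\PP(A_T)=\EE_{\QQ}\bigl[\ind_{A_T}/Z_T\bigr]$ requires $\ind_{A_T}/Z_T$ to be $\cF_T$-measurable; if you instead replace $A_T$ by a $\QQ$-null cover $N\in\cF$, the desired conclusion $\PP(N\cap\{\zeta>T\})=0$ can simply fail (take $\cF_T$ trivial and $N$ a later-revealed atom charged by $\PP$ but not by $\QQ$). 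This measurability problem is precisely what the paper's proof is built to circumvent: it works under the dominating measure $\overline{\PP}=(\PP+\QQ)/2$, shows that $X\ind_{\dbraco{0,\zeta}}$ is $\overline{\PP}$-indistinguishable from an $\FF^{\overline{\PP}}$-optional process, and then transfers the path property via the section-theorem-based results of Jacod and He--Wang--Yan. To repair your argument you would need to produce, from the a.s.\ c\`adl\`ag property under one measure, an $\cF_T$-measurable null cover of the non-c\`adl\`ag event on $[0,T]$; that is essentially the content of those cited results and does not follow from rational-time tests.
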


In particular, in the context of Proposition \ref{prop:Leng}, Lemma \ref{lem:RC} ensures that an adapted process $X$ has $\QQ$-a.s. c\`adl\`ag paths if and only if the process $ZX$ has $\PP$-a.s. c\`adl\`ag paths.

\subsection{Proofs of the results stated in Section \ref{sec:main}}
\label{sec:proofs_main}

We recall that, for $M\in\MlocP$, the process $\widehat{M}$ is the Lenglart transformation of $M$, i.e., the element of $\MlocQ$ defined via \eqref{eq:transform}.

\begin{proof}[Proof of Theorem \ref{thm1}]
In view of \cite[Corollary 4.12]{J79}, in order to prove $\cH^1_0(\QQ)=\cL^1(\widehat{\MP},\QQ)$, it suffices to show that every  bounded $N\in\MQzero$ such that $N\widehat{M}\in\MlocQ$, for all $M\in\MP$, is null.
Recalling that $\QQ(\zeta<+\infty)=0$, we can apply integration by parts under $\QQ$ and compute
\begin{align}
N = \frac{ZN}{Z}
&= \frac{1}{Z_-}\cdot (ZN) + (ZN)_-\cdot\frac{1}{Z} + \left[ZN,\frac{1}{Z}\right]	\notag\\
&= \frac{1}{Z_-}\cdot\left(ZN - \frac{1}{Z}\cdot[ZN,Z]\right) - \frac{(ZN)_-}{Z^2_-}\cdot\left(Z-\frac{1}{Z}\cdot[Z]\right),
\label{eq:int_parts}
\end{align}
where the last equality makes use of the identities
\be	\label{eq:computations}
\left[ZN,\frac{1}{Z}\right] = - \frac{1}{ZZ_-}\cdot[ZN,Z]
\qquad\text{ and }\qquad
\frac{1}{Z} = \frac{1}{Z_0} - \frac{1}{Z^2_-}\cdot\left(Z-\frac{1}{Z}\cdot[Z]\right),
\ee
as can be readily verified by applying It\^o's formula (under $\QQ$), see also \eqref{eq:computeZ} below.
Furthermore, on $\{\eta<+\infty\}$ (under the measure $\PP$) it holds that
$
\Delta (ZN)_{\eta}
= \frac{(ZN)_{\eta-}}{Z_{\eta-}}\Delta Z_{\eta},
$
which implies that $(\Delta(ZN)_{\eta}\ind_{\dbraco{\eta,+\infty}})^{p,\PP}=((ZN)_-/Z_-)\cdot(\Delta Z_{\eta}\ind_{\dbraco{\eta,+\infty}})^{p,\PP}$.
In turn, making use of representation \eqref{eq:transform}, this enables us to rewrite \eqref{eq:int_parts} under $\QQ$ as follows:
\[
N = \frac{1}{Z_-}\cdot\widehat{ZN} - \frac{N_-}{Z_-}\cdot\widehat{Z}.
\]
Since $N$ is bounded, $[N]\in\Aloc(\QQ)$ and therefore the predictable quadratic variation $\langle N\rangle^{\QQ}$ of $N$ under $\QQ$ is well-defined and can be explicitly computed as
\[
\langle N\rangle^{\QQ} = \frac{1}{Z_-}\cdot\langle N,\widehat{ZN}\rangle^{\QQ} - \frac{N_-}{Z_-}\cdot\langle N,\widehat{Z}\rangle^{\QQ}.
\]
By assumption, $N\widehat{M}\in\MlocQ$ for every $M\in\MP$.
Since $ZN$ and $Z$ belong to $\MP$, it follows that $\langle N,\widehat{ZN}\rangle^{\QQ}\equiv0$ and $\langle N,\widehat{Z}\rangle^{\QQ}\equiv0$. Therefore, we have that $\langle N\rangle^{\QQ}\equiv0$, thus proving that $N$ is null (up to an evanescent set under $\QQ$).
Finally, $\MlocQzero=\cL_{{\rm loc}}^1(\widehat{\MP},\QQ)$ is a direct consequence of the fact that $\MlocQ=\cH^1_{{\rm loc}}(\QQ)$, see \cite[Proposition 2.38]{J79}.
\end{proof}

We proceed to proving our second main result (Theorem \ref{thm2}). This makes use of the following two technical lemmata, which concern the behavior of continuous and purely discontinuous local martingales and stochastic integrals under locally absolutely continuous changes of probability. 
We recall that, for two semimartingales $X$ and $Y$ on $(\Omega,\cF,\FF,\PP)$, the quadratic variation $[X,Y]$ under $\PP$ is also a version of the quadratic variation under $\QQ$ (see \cite[Theorem III.3.13]{JS03}).


\begin{lem}	\label{lem:cont_disc}
If $M\in\mMloc^c(\PP)$ ($\mMloc^d(\PP)$, resp.), then $\widehat{M}\in\mMloc^c(\QQ)$ ($\mMloc^d(\QQ)$, resp.).
\end{lem}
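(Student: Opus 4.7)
The plan is to leverage the explicit form of the Lenglart transformation \eqref{eq:transform} and write $\widehat{M} = M + R$, where
\[ R := -\tfrac{1}{Z}\cdot[M,Z] + \bigl(\Delta M_{\eta}\ind_{\dbraco{\eta,+\infty}}\bigr)^{p,\PP} \]
is a predictable process of finite variation: $[M,Z]$ is FV as the quadratic covariation of two semimartingales, the Stieltjes-type integral $\frac{1}{Z}\cdot[M,Z]$ is therefore FV, and the dual predictable projection is FV by construction. Once this decomposition is in place, both claims follow by exploiting the fact that an FV perturbation preserves continuity (for the first assertion) and preserves the continuous part of the quadratic variation (for the second).

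For the continuous case $M\in\mMloc^c(\PP)$, I would observe that $\Delta M\equiv 0$ forces the second term of $R$ to vanish identically, while
\[ [M,Z] = \langle M,Z^c\rangle^{\PP} + \sum_{0<s\leq\cdot}\Delta M_s\Delta Z_s = \langle M,Z^c\rangle^{\PP} \]
is continuous; consequently $R$ is continuous and so is $\widehat{M} = M + R$. Since $\widehat{M}\in\mMlocQ$ by Proposition \ref{prop:Leng}-(iii), this forces $\widehat{M}\in\mMloc^c(\QQ)$.

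For the purely discontinuous case $M\in\mMloc^d(\PP)$, I would use the standard characterization that a local martingale $N$ belongs to $\mMloc^d$ (under any probability measure) if and only if the continuous part $[N,N]^c$ of its pathwise quadratic variation vanishes. Since $R$ is of finite variation, the covariations $[M,R] = \sum_{0<s\leq\cdot}\Delta M_s\Delta R_s$ and $[R,R] = \sum_{0<s\leq\cdot}(\Delta R_s)^2$ are pure jump, so the expansion
\[ [\widehat{M},\widehat{M}] = [M,M] + 2[M,R] + [R,R] \]
gives $[\widehat{M},\widehat{M}]^c = [M,M]^c$. Now $[M,M]^c\equiv 0$ since $M\in\mMloc^d(\PP)$, and because the quadratic variation is a pathwise object (hence invariant under the change of measure), this identity persists under $\QQ$ and yields $\widehat{M}\in\mMloc^d(\QQ)$.

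The main obstacle is conceptual rather than computational: $M$ itself need not be a $\QQ$-local martingale, only a $\QQ$-semimartingale via $M = \widehat{M} - R$. This is precisely what ensures that the $\QQ$-quadratic variation of $M$ is well-defined and coincides pathwise with its $\PP$-counterpart, legitimizing the unambiguous use of $[M,M]^c$ across both measures that underpins the entire argument.
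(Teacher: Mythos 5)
Your proof is correct and follows essentially the same route as the paper's: the continuous case is handled identically, and for the purely discontinuous case your observation that the finite-variation perturbation $R$ leaves $[\widehat{M}]^c=[M]^c=0$ is just a streamlined packaging of the paper's explicit six-term expansion of $[\widehat{M}]$ into $\sum_{s\leq\cdot}(\Delta\widehat{M}_s)^2$. One cosmetic slip: $R$ has finite variation but is not predictable in general (the term $\frac{1}{Z}\cdot[M,Z]$ is merely optional); fortunately your argument only ever uses the finite-variation property.
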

\begin{proof}
Consider first $M\in\Mloc^c(\PP)$. Proposition \ref{prop:Leng}-(iii) implies that $\widehat{M}=M-(1/Z)\cdot[M,Z]$. Since $[M,Z]=[M,Z]^c$, it immediately follows that $\widehat{M}\in\Mloc^c(\QQ)$.
Consider then $M\in\Mloc^d(\PP)$. In this case, setting $A:= (\Delta M_{\eta}\ind_{\dbraco{\eta,+\infty}})^{p,\PP}$ for brevity of notation, we compute
\begin{align*}
[\widehat{M}]
&= [M] + \left[\frac{1}{Z}\cdot[M,Z]\right] + [A]
- 2\left[M,\frac{1}{Z}\cdot[M,Z]\right] 
+ 2\left[M,A\right] - 2\left[\frac{1}{Z}\cdot[M,Z],A\right]	\\
&= \sum_{s\leq\cdot}(\Delta M_s)^2 
+ \sum_{s\leq\cdot}\frac{(\Delta M_s\Delta Z_s)^2}{Z^2_s} 
+ \sum_{s\leq\cdot}(\Delta A_s)^2
- 2\sum_{s\leq\cdot}\frac{(\Delta M_s)^2\Delta Z_s}{Z_s}	\\
&\quad\;
+ 2\sum_{s\leq\cdot}\Delta M_s\Delta A_s
- 2\sum_{s\leq\cdot}\frac{\Delta M_s\Delta Z_s\Delta A_s}{Z_s}
= \sum_{s\leq\cdot}(\Delta\widehat{M}_s)^2.
\end{align*}
We have thus shown that $[\widehat{M}]=\sum_{s\leq\cdot}(\Delta\widehat{M}_s)^2$, which means that $\widehat{M}\in\Mloc^d(\QQ)$. 
\end{proof}

\begin{lem}	\label{lem:stoch_int}
If $M$ is an $\R^d$-valued process such that $M^i\in\mMlocP$, for each $i=1,\ldots,d$, and $H\in L_m(M,\PP)$, then $H\in L_m(\widehat{M},\QQ)$ and $\widehat{H\cdot M}$ is a version of $H\cdot\widehat{M}$.
\end{lem}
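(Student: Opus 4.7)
The plan is to verify the identity $\widehat{H\cdot M} = H\cdot\widehat{M}$ by a direct calculation, and then to read the integrability claim $H\in L_m(\widehat{M},\QQ)$ off from this identification.

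First, I would apply Proposition \ref{prop:Leng}-(iii) to the $\PP$-local martingale $H\cdot M$ (which is well-defined by hypothesis, and belongs to $\MlocP$ since $H\in L_m(M,\PP)$). This yields the $\QQ$-local martingale
\[
\widehat{H\cdot M} = H\cdot M - \frac{1}{Z}\cdot[H\cdot M,Z] + \bigl(\Delta(H\cdot M)_\eta\ind_{\dbraco{\eta,+\infty}}\bigr)^{p,\PP}.
\]
Next, I would simplify each term using standard properties of stochastic integration: bilinearity of quadratic covariation gives $[H\cdot M,Z] = H\cdot[M,Z] = \sum_{i=1}^d H^i\cdot[M^i,Z]$, so the middle term becomes $(H/Z)\cdot[M,Z]$; the jump identity $\Delta(H\cdot M)_\eta = H_\eta\Delta M_\eta$ follows from the definition of the stochastic integral; and for the compensator term I would invoke the commutation property $(H\cdot A)^{p,\PP} = H\cdot A^{p,\PP}$, valid whenever $H$ is predictable and $A\in\Aloc(\PP)$, to write
\[
\bigl(H_\eta\Delta M_\eta\ind_{\dbraco{\eta,+\infty}}\bigr)^{p,\PP} = H\cdot\bigl(\Delta M_\eta\ind_{\dbraco{\eta,+\infty}}\bigr)^{p,\PP}.
\]

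Assembling the three pieces and factoring out $H$ yields
\[
\widehat{H\cdot M} = H\cdot\left(M - \frac{1}{Z}\cdot[M,Z] + \bigl(\Delta M_\eta\ind_{\dbraco{\eta,+\infty}}\bigr)^{p,\PP}\right) = H\cdot\widehat{M},
\]
where the right-hand side is interpreted as the stochastic integral of $H$ against the semimartingale $\widehat{M}$ (defined componentwise as $\sum_i H^i\cdot\widehat{M^i}$). Since this semimartingale stochastic integral has now been identified with a bona fide $\QQ$-local martingale, namely $\widehat{H\cdot M}$, the standard characterization of integrability in the sense of local martingales (see \cite[Proposition 2.46 and Remark 2.47]{J79}) forces $H\in L_m(\widehat{M},\QQ)$, and the semimartingale integral and the local-martingale-sense integral coincide, proving the claim.

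The main obstacle I anticipate is justifying carefully the commutation of the predictable integrand $H$ with the dual predictable projection: one has to argue that the locally finite variation process $H\cdot(\Delta M_\eta\ind_{\dbraco{\eta,+\infty}})$ is itself in $\Aloc(\PP)$, which follows a posteriori from the fact that the full Lenglart transformation of $H\cdot M$ is a well-defined $\QQ$-local martingale. A secondary technical point is the vector-valued nature of $M$: all identities above must be interpreted componentwise, and one has to invoke the analogous multidimensional formulation of Proposition \ref{prop:Leng}-(iii) together with the fact that componentwise Lenglart transformation commutes with taking linear combinations, so that $\widehat{H\cdot M} = \sum_i H^i\cdot\widehat{M^i}$.
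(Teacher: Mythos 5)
Your proof is correct in outline but follows a genuinely different route from the paper's. The paper does \emph{not} run the direct global computation: it first reduces to the two cases $M\in\Mloc^c(\PP)$ and $M\in\Mloc^d(\PP)$. In the continuous case it simply cites \cite[Proposition 7.26]{J79}, since there the Lenglart transformation reduces to the classical Girsanov transformation. In the purely discontinuous case it uses Lemma \ref{lem:cont_disc} to know that both $\widehat{H\cdot M}$ and $H\cdot\widehat{M}$ must be purely discontinuous $\QQ$-local martingales, so that by \cite[Definition 2.46]{J79} it suffices to match jumps, i.e.\ to verify $\Delta(\widehat{H\cdot M})=H\,\Delta\widehat{M}$ --- and that verification uses exactly the two identities you isolate, $\Delta(H\cdot M)=H\Delta M$ and $(H_{\eta}\Delta M_{\eta}\ind_{\dbraco{\eta,+\infty}})^{p,\PP}=H\cdot(\Delta M_{\eta}\ind_{\dbraco{\eta,+\infty}})^{p,\PP}$. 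What the paper's detour through the continuous/purely discontinuous decomposition buys is precisely that it sidesteps the two points where your argument carries the most hidden weight: (a) to ``factor out $H$'' and interpret $H\cdot\widehat{M}$ as a semimartingale integral under $\QQ$, you must first check that $H$ is $\QQ$-integrable against each of the three summands separately (against $M$ via \cite[Proposition III.6.24]{JS03}, against $(1/Z)\cdot[M,Z]$ and $(\Delta M_{\eta}\ind_{\dbraco{\eta,+\infty}})^{p,\PP}$ pathwise via Kunita--Watanabe and domination of the variation of a compensator by the compensator of the variation); and (b) the passage from ``$H\cdot\widehat{M}$ is a semimartingale integral that happens to be a $\QQ$-local martingale'' to ``$H\in L_m(\widehat{M},\QQ)$ with coinciding integrals'' is a nontrivial theorem of Jacod, not merely the definition in \cite[2.46]{J79}. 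Both points are true, so there is no gap, but as written they are asserted rather than proved; the paper's jump-matching argument for the purely discontinuous part avoids them entirely because a purely discontinuous local-martingale integral is characterized by its jumps, with the required local integrability $(\sum_s(H_s\Delta\widehat{M}_s)^2)^{1/2}\in\Aloc(\QQ)$ delivered for free by $[\widehat{H\cdot M}]^{1/2}$.
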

\begin{proof}
For simplicity of presentation, we only prove the claim for a real-valued process $M$, the multi-dimensional case being analogous.
It suffices to consider separately the cases $M\in\Mloc^c(\PP)$ and $M\in\Mloc^d(\PP)$. 
If $M\in\Mloc^c(\PP)$, then the claim follows from \cite[Proposition 7.26]{J79}, since in the continuous case the Lenglart transformation coincides with the usual Girsanov transformation.


Consider then the case $M\in\Mloc^d(\PP)$. By Lemma \ref{lem:cont_disc}, $\widehat{M}$ and $\widehat{H\cdot M}$ belong to $\Mloc^d(\QQ)$. Therefore, in view of \cite[Definition 2.46]{J79}, in order to show that $\widehat{H\cdot M}$ is a version of $H\cdot\widehat{M}$ it suffices to show that $\Delta(\widehat{H\cdot M})=H\Delta\widehat{M}$. To this end, making use of formula \eqref{eq:transform}, we compute
\be	\label{eq:lem_si_proof}
\Delta(\widehat{H\cdot M}) = \Delta(H\cdot M) - \frac{\Delta(H\cdot M)\Delta Z}{Z} + \Delta\bigl(\Delta(H\cdot M)_{\eta}\ind_{\dbraco{\eta,+\infty}}\bigr)^{p,\PP}.
\ee
Note that $\Delta(H\cdot M)=H\Delta M$ and, due to the predictability of $H$, it holds that
\[
\bigl(\Delta(H\cdot M)_{\eta}\ind_{\dbraco{\eta,+\infty}}\bigr)^{p,\PP}
= \bigl(H_{\eta}\Delta M_{\eta}\ind_{\dbraco{\eta,+\infty}}\bigr)^{p,\PP}
= \bigl(H\cdot(\Delta M_{\eta}\ind_{\dbraco{\eta,+\infty}})\bigr)^{p,\PP}
= H\cdot\bigl(\Delta M_{\eta}\ind_{\dbraco{\eta,+\infty}}\bigr)^{p,\PP}.
\]
This enables us to rewrite \eqref{eq:lem_si_proof} as follows:
\[
\Delta(\widehat{H\cdot M})
= H\Delta M - \frac{H\Delta M\Delta Z}{Z} + H\Delta\bigl(\Delta M_{\eta}\ind_{\dbraco{\eta,+\infty}}\bigr)^{p,\PP}
= H\Delta\widehat{M},
\]
thus completing the proof.
\end{proof}

\begin{proof}[Proof of Theorem \ref{thm2}]
Under the present assumptions, it holds that
\[
\widehat{\MPzero} = \bigl\{\widehat{H\cdot X} : H\in L_m(X,\PP)\text{ and }H\cdot X\in\MP\bigr\}
\subseteq 
\bigl\{H\cdot\widehat{X} : H\in L_m(\widehat{X},\QQ)\bigr\},
\]
where the last inclusion follows from Lemma \ref{lem:stoch_int}.
By Theorem \ref{thm1} we then have
\[
\MlocQzero 
\subseteq \cLloc^1\bigl(\bigl\{H\cdot\widehat{X} : H\in L_m(\widehat{X},\QQ)\bigr\},\QQ\bigr)
= \cLloc^1(\widehat{X},\QQ)
= \bigl\{H\cdot\widehat{X} : H\in L_m(\widehat{X},\QQ)\bigr\},
\]
where the first equality follows by the associativity of the stochastic integral and the second from \cite[Theorem 4.6]{J79}.
Since $\bigl\{H\cdot\widehat{X} : H\in L_m(\widehat{X},\QQ)\bigr\}
\subseteq \MlocQzero$, the theorem is proved.
\end{proof}

\subsection{Proof of the representation stated in Remark \ref{rem:integrand}}	\label{sec:proof_rem}

In this subsection, we prove the explicit representation given in equation \eqref{eq:integrand}. We proceed by adapting to the present setting some of the arguments used in the proof of \cite[Theorem 2.6]{F18}.

Since $\QQ\abs\PP$, the density process $Z$ is a strictly positive semimartingale under $\QQ$ (see \cite[Theorem III.3.13]{JS03}). Therefore, as in \eqref{eq:computations}, an application of It\^o's formula yields that
\be	\label{eq:computeZ}
\frac{1}{Z} 
= \frac{1}{Z_0} - \frac{1}{Z^2_-}\cdot Z + \frac{1}{Z^3_-}\cdot[Z]^c + \sum_{s\leq\cdot}\left(\frac{1}{Z_s}-\frac{1}{Z_{s-}}+\frac{\Delta Z_s}{Z^2_{s-}}\right)
= \frac{1}{Z_0} - \frac{1}{Z^2_-}\cdot\left(Z-\frac{1}{Z}\cdot[Z]\right).
\ee
By MRP under $\PP$, there exists a process $H\in L_m(X,\PP)$ such that $Z=Z_0+H\cdot X$ (see equation \eqref{eq:MRP_Z}). In view of \cite[Proposition III.6.24]{JS03}, the process $H$ is integrable with respect to $X$ under $\QQ$ in the semimartingale sense. Hence, by the associativity of the stochastic integral, we have that
\be	\label{eq:1/Z}
\frac{1}{Z} = \frac{1}{Z_0} - \frac{H}{Z^2_-}\cdot\left(X-\frac{1}{Z}\cdot[X,Z]\right).
\ee
Recall from  \eqref{eq:MRP_Z} that there exists a sequence of stopping times $(\tau_n)_{n\in\N}$ increasing $\QQ$-a.s. to infinity such that $(ZN)^{\tau_n}=K^n\cdot X$, with $K^n\in L_m(X,\PP)$, for each $n\in\N$. 
Similarly as above, $K^n\cdot X$ also makes sense as a semimartingale stochastic integral under $\QQ$. Therefore, using similar arguments as in the proof of Theorem \ref{thm1}, we can apply integration by parts and equation \eqref{eq:1/Z}, thus obtaining
\begin{align}
N^{\tau_n} = \frac{(ZN)^{\tau_n}}{Z^{\tau_n}}
&= (ZN)_-\cdot\frac{1}{Z^{\tau_n}} + \frac{1}{Z_-}\cdot(ZN)^{\tau_n} + \left[\frac{1}{Z},ZN\right]^{\tau_n}	\notag\\
&= -\frac{(ZN)_-H}{Z^2_-}\cdot\left(X^{\tau_n}-\frac{1}{Z}\cdot[X,Z]^{\tau_n}\right) + \frac{K^n}{Z_-}\cdot X^{\tau_n} - \frac{K^n}{ZZ_-}\cdot[X,Z]^{\tau_n}	\notag\\
&= \phi^n\cdot\left(X^{\tau_n}-\frac{1}{Z}\cdot[X,Z]^{\tau_n}\right),
\label{eq:int_parts_MRP}
\end{align}
where, for each $n\in\N$,
\[
\phi^n := \frac{1}{Z_-}\left(K^n-\frac{(ZN)_-H}{Z_-}\right)\ind_{\dbra{0,\tau_n}}.
\]
Moreover, on $\{\eta\leq\tau_n\}\cap\{\eta<+\infty\}$ (under the measure $\PP$) it holds that
\[
\left(K^n_{\eta} - \frac{(ZN)_{\eta-}H_{\eta}}{Z_{\eta-}}\right)\Delta X^{\tau_n}_{\eta}
= \Delta(ZN)^{\tau_n}_{\eta} - \frac{(ZN)_{\eta-}}{Z_{\eta-}}\Delta Z^{\tau_n}_{\eta}
= 0.
\]
In turn, this implies that 
$
0=(\phi^n_{\eta}\Delta X^{\tau_n}_{\eta}\ind_{\dbraco{\eta,+\infty}})^{p,\PP}
= \phi^n\ind_{\{Z_->0\}}\cdot(\Delta X^{\tau_n}_{\eta}\ind_{\dbraco{\eta,+\infty}})^{p,\PP}
$
up to an evanescent set. Therefore, by \eqref{eq:int_parts_MRP} together with \eqref{eq:transform}, it follows that, up to a $\QQ$-evanescent set,
\[
N^{\tau_n} = \phi^n\cdot\widehat{X}^{\tau_n},
\qquad\text{ for each }n\in\N.
\]
Setting $\phi:=\sum_{n\in\N}\phi^n\ind_{\dbraoc{\tau_{n-1},\tau_n}}$, with $\tau_0:=0$, we finally obtain that $N=\phi\cdot\widehat{X}$.

\subsection{Proof of the results stated in Section \ref{sec:corollaries}}

In this section, we present the proof of the remaining results of the paper, starting with Proposition \ref{prop:orth}.

\begin{proof}[Proof of Proposition \ref{prop:orth}]
We have to show that, if $\Delta X_{\eta^{{\rm ac}}}=0$ $\PP$-a.s. on $\{\eta^{{\rm ac}}<+\infty\}$ and $[X^i,X^j]\equiv0$, for all $i,j=1,\ldots,d$ with $i\neq j$, then $[\widehat{X}^i,\widehat{X}^j]\equiv0$ (up to a $\QQ$-evanescent set), for all $i,j=1,\ldots,d$ with $i\neq j$.
We first compute
\[
\bigl[(\widehat{X}^i)^c,(\widehat{X}^j)^c\bigr]
= \bigl[\widehat{(X^i)^c},\widehat{(X^j)^c}\bigr] 
= \bigl[(X^i)^c,(X^j)^c\bigr],
\]
where the first equality follows from Lemma \ref{lem:cont_disc} and the uniqueness of the decomposition of a local martingale into a continuous part and a purely discontinuous part (see \cite[Theorem I.4.18]{JS03}).
The assumption that $[X^i,X^j]\equiv0$ implies that $[(X^i)^c,(X^j)^c]\equiv0$, so that $[(\widehat{X}^i)^c,(\widehat{X}^j)^c]\equiv0$ up to a $\QQ$-evanescent set.
It remains to show that $\Delta\widehat{X}^i_t\,\Delta\widehat{X}^j_t=0$ $\QQ$-a.s. for all $t\geq0$ and $i,j=1,\ldots,d$ with $i\neq j$.
For brevity of notation, let $A^i:=(\Delta X^i_{\eta}\ind_{\dbraco{\eta,+\infty}})^{p,\PP}$, for $i=1,\ldots,d$.
If $\Delta X_{\eta^{{\rm ac}}}=0$ a.s. on $\{\eta^{{\rm ac}}<+\infty\}$, it holds that $A^i=(\Delta X^i_{\eta^{{\rm in}}}\ind_{\dbraco{\eta^{{\rm in}},+\infty}})^{p,\PP}$. Since the process $\Delta X^i_{\eta^{{\rm in}}}\ind_{\dbraco{\eta^{{\rm in}},+\infty}}$ is quasi-left-continuous, \cite[Proposition I.2.35]{JS03} implies that $A^i$ is continuous $\PP$-a.s. and, hence, $\QQ$-a.s. 
(see \cite[Theorem 12.9]{HWY}).
In view of equation \eqref{eq:transform}, we therefore obtain that
\[
\Delta\widehat{X}^i = \Delta X^i - \frac{\Delta X^i\Delta Z}{Z}.
\]
Recall that $[X^i,X^j]\equiv0$ implies that $\Delta X^i_t\,\Delta X^j_t=0$ $\PP$-a.s. for all $t\geq0$. Since by assumption $\QQ\abs\PP$, we deduce that $\Delta\widehat{X}^i_t\,\Delta\widehat{X}^j_t=0$.
\end{proof}

We proceed with the proof of Proposition \ref{prop:equiv}, which relies on the symmetric role of the two probabilities $\QQ$ and $\PP$ under the assumption $\QQ\equ\PP$.

\begin{proof}[Proof of Proposition \ref{prop:equiv}]
By Theorem \ref{thm2}, it suffices to show that, if $\widehat{X}$ has the MRP under $\QQ$, then $X$ has the MRP under $\PP$.
Since $\QQ\sim_{{\rm loc}}\PP$, the density process of $\PP$ relative to $\QQ$ is given by $1/Z$. 
We can then apply Proposition \ref{prop:Leng}-(iii) on $(\Omega,\cF,\FF,\QQ)$ to the process $\widehat{X}$, yielding
\begin{align*}
\widehat{X} - Z\cdot\left[\widehat{X},\frac{1}{Z}\right]
&= X - \frac{1}{Z}\cdot[X,Z] - Z\cdot\left[X,\frac{1}{Z}\right] + Z\cdot\left[\frac{1}{Z}\cdot[X,Z],\frac{1}{Z}\right]	\\
&= X - \frac{1}{Z}\cdot[X,Z] + \frac{1}{Z_-}\cdot[X,Z] - \sum_{s\leq\cdot} \frac{\Delta X(\Delta Z)^2}{ZZ_-}
= X.
\end{align*}
As a consequence of Theorem \ref{thm2} applied to the local martingale $\widehat{X}$ on $(\Omega,\cF,\FF,\QQ)$, the process $X$ has the MRP under $\PP$, thus proving the claim.
\end{proof}

Finally, we conclude with the short proof of Proposition \ref{prop:dim}.

\begin{proof}[Proof of Proposition \ref{prop:dim}]
It suffices to consider the case $\onedim\cH^1(\PP)<+\infty$.
Let $X$ be an $\R^d$-valued local martingale on $(\Omega,\cF,\FF,\PP)$ that is a basis for $\cH^1(\PP)$ and let $\widehat{X}$ be defined as in \eqref{eq:transform}. By Theorem \ref{thm2}, it holds that $\cL^1(\widehat{X},\QQ)=\cH^1_0(\QQ)$. 
This implies that $\onedim\cH^1(\QQ)\leq d=\onedim\cH^1(\PP)$. If $\QQ\equ\PP$, using the result of Proposition \ref{prop:equiv} and reversing the role of $\PP$ and $\QQ$ in the previous argument, we obtain that $\onedim\cH^1(\PP)\leq \onedim\cH^1(\QQ)$, thus proving the claim.
\end{proof}

\bibliographystyle{alpha}
\bibliography{abs_cont}

\begin{thebibliography}{HWY92}

\bibitem[ACJ15]{ACJ15}
A.~Aksamit, T.~Choulli, and M.~Jeanblanc.
\newblock On an optional semimartingale decomposition and the existence of a
  deflator in an enlarged filtration.
\newblock In C.~Donati-Martin, A.~Lejay, and A.~Rouault, editors, {\em In
  Memoriam Marc Yor - {S}\'eminaire de Probabilit\'es {XLVII}}, volume 2137 of
  {\em Lecture Notes in Mathematics}, pages 187--218. Springer, Cham, 2015.

\bibitem[CE15]{CohenElliott}
S.N. Cohen and R.J. Elliott.
\newblock {\em Stochastic Calculus and Applications}.
\newblock Probability and its Applications. Birkh\"auser, New York, second
  edition, 2015.

\bibitem[Duf85]{Duffie85}
D.~Duffie.
\newblock Predictable representation of martingale spaces and changes of
  probability measures.
\newblock In J.~Az\'ema and M.~Yor, editors, {\em {S}\'eminaire de
  Probabilit\'es {XIX}}, volume 1123 of {\em Lecture Notes in Mathematics},
  pages 278--284. Springer, Berlin - Heidelberg, 1985.

\bibitem[DV74]{DV74}
M.H.A. Davis and P.~Varaiya.
\newblock The multiplicity of an increasing family of $\sigma$-fields.
\newblock {\em Annals of Probability}, 2(5):958--963, 1974.

\bibitem[Fon14]{F14}
C.~Fontana.
\newblock No-arbitrage conditions and absolutely continuous changes of measure.
\newblock In C.~Hillairet, M.~Jeanblanc, and Y.~Jiao, editors, {\em Arbitrage,
  Credit and Informational Risks}, volume~5 of {\em Peking University Series in
  Mathematics}, pages 3--18. World Scientific, Singapore, 2014.

\bibitem[Fon18]{F18}
C.~Fontana.
\newblock The strong predictable representation property in initially enlarged
  filtrations under the density hypothesis.
\newblock {\em Stochastic Processes and their Applications}, 128(3):1007--1033,
  2018.

\bibitem[HWY92]{HWY}
S.W. He, J.G. Wang, and J.A. Yan.
\newblock {\em Semimartingale Theory and Stochastic Calculus}.
\newblock Kexue Chubanshe (Science Press), Beijing, 1992.

\bibitem[Jac79]{J79}
J.~Jacod.
\newblock {\em Calcul Stochastique et Probl\`emes de Martingales}, volume 714
  of {\em Lecture Notes in Mathematics}.
\newblock Springer, Berlin, 1979.

\bibitem[JS03]{JS03}
J.~Jacod and A.N. Shiryaev.
\newblock {\em Limit Theorems for Stochastic Processes}, volume 288 of {\em
  Grundlehren der Mathematischen Wissenschaften}.
\newblock Springer, Berlin, second edition, 2003.

\bibitem[JS15]{JS15}
M.~Jeanblanc and S.~Song.
\newblock Martingale representation property in progressively enlarged
  filtrations.
\newblock {\em Stochastic Processes and their Applications},
  125(11):4242--4271, 2015.

\bibitem[JYC09]{JYC09}
M.~Jeanblanc, M.~Yor, and M.~Chesney.
\newblock {\em Mathematical Methods for Financial Markets}.
\newblock Springer Finance. Springer, London, 2009.

\bibitem[KP19]{KP17}
D.~Kramkov and S.~Pulido.
\newblock Density of the set of probability measures with the martingale
  representation property.
\newblock {\em Annals of Probability}, 47(4):2563--2581, 2019.

\bibitem[Len77]{Leng77}
E.~Lenglart.
\newblock Transformation des martingales locales par changement absolument
  continu de probabilit\'es.
\newblock {\em Zeitschrift f\"ur Wahrscheinlichkeitstheorie und Verwandte
  Gebiete}, 39(1):65--70, 1977.

\bibitem[Mey76]{Meyer76}
P.A. Meyer.
\newblock Un cours sur les int\'egrales stochastiques.
\newblock In P.A. Meyer, editor, {\em {S}\'eminaire de Probabilit\'es {X}},
  volume 511 of {\em Lecture Notes in Mathematics}, pages 245--400. Springer,
  Berlin - Heidelberg, 1976.

\bibitem[PR15]{PR15}
N.~Perkowski and J.~Ruf.
\newblock Supermartingales as {R}adon-{N}ikodym densities and related measure
  extensions.
\newblock {\em Annals of Probability}, 43(6):3133--3176, 2015.

\bibitem[{\v{Z}}it06]{Zit06}
G.~{\v{Z}}itkovi\'c.
\newblock Financial equilibria in the semimartingale setting: complete markets
  and markets with withdrawal constraints.
\newblock {\em Finance and Stochastics}, 10(1):99--119, 2006.

\end{thebibliography}

\end{document}